\documentclass[preprint,11pt]{elsarticle}
\usepackage[utf8]{inputenc}
\usepackage[T1]{fontenc}
\usepackage{amsfonts,amsmath,amssymb,amsthm,bbm,enumerate,enumitem,geometry,wrapfig,url,yfonts,fancyhdr}
\usepackage{multirow}
\usepackage{subfigure,graphicx,color,mathrsfs }
\everymath{\displaystyle}
\usepackage{ulem}

\geometry{verbose,a4paper,tmargin=2.5cm,bmargin=2.5cm,lmargin=2.5cm,rmargin=2.5cm}

\newtheorem{tw}{Theorem}[section]	

\newtheorem{proposition}[tw]{Proposition}

\theoremstyle{definition} 
\newtheorem{definition}[tw]{Definition}

\newtheorem{example}[tw]{Example}
\newtheorem{remark}[tw]{Remark}
\setcounter{secnumdepth}{1}

\renewenvironment{proof}[1][\textbf{Proof}]{\noindent\textbf{#1.} }{\hfill\qed\vspace{2.5ex}}

\setlength{\parindent}{5mm} 




\newcommand\mN{{\mathbb N}}

\newcommand\agg{\mathsf{Agg}}
\newcommand\wmu{\widehat{\mu}}
\newcommand\wnu{\widehat{\nu}}

\newcommand\sA{\mathsf{A}}
\newcommand\waA[2][D]{\widehat{\sA}(#2|#1)}
\newcommand\aA[2][D]{\sA(#2|#1)}
\newcommand\aAi[3][D]{\sA^{#3}(#2|#1)}

\newcommand\bA{{\mathscr {A}}}

\newcommand\bF{\mathbf{F}} 
\newcommand\bM{\mathbf{M}} 
\newcommand\bEM{\widehat{\mathbf{M}}} 

\newcommand\cD{{\mathcal D}}

\newcommand\cH{{\mathcal H}}

\newcommand\cP{{\mathcal P}}
\newcommand\cR{{\mathcal R}}


\newcommand{\bs}[1]{B_{(#1)}}
\newcommand{\cs}[1]{C_{(#1)}}
\newcommand{\f}[1]{f_{(#1)}}
\newcommand{\fs}[1]{\aAi[#1]{f}{\inf}}

\newcommand\fC{{\mathfrak C}}
\newcommand\fR{{\mathfrak R}}

\newcommand{\rC}{\mathrm{C}}

\newcommand{\rF}{\mathrm{F}}


\newcommand{\rL}{\mathrm{L}}


\newcommand{\mob}[1][\mu]{\textsf{Mob}_{#1}}

\newcommand{\sI}{\mathsf{I}}

\newcommand\intl{\int\limits}
\newcommand{\rCS}{\mathrm{CS}}

\newcommand\suc[3][\mu]{\mathrm{Su}^{#3}(#2,#1)}


\newcommand{\noi}{\noindent}

\renewcommand\c{\circ}

\newcommand\md{\,{\mathrm{d}}}
\renewcommand\ge{\geqslant}
\renewcommand\le{\leqslant}

\newcommand{\mI}[1]{\mathbbm{1}_{#1}}

\usepackage{xcolor}\definecolor{darkgreen}{rgb}{0,0.5,0}

\begin{document}
\begin{frontmatter}
\title{Choquet-Sugeno-like operator based on  relation\\ and conditional aggregation operators}

\author{Micha{\l} Boczek\corref{cor1}\fnref{label1}}
\ead{michal.boczek.1@p.lodz.pl}
\cortext[cor1]{michal.boczek.1@p.lodz.pl}

\author{Ondrej Hutn\'{i}k\fnref{label2}}
\ead{ondrej.hutnik@upjs.sk}

\author{Marek Kaluszka\fnref{label1}}
 \ead{marek.kaluszka@p.lodz.pl}

\address[label1]{Institute of Mathematics, Lodz University of Technology, 90-924 Lodz, Poland}
\address[label2]{Institute of Mathematics, Pavol Jozef \v Saf\'arik University in Ko\v sice, 040-01 Ko\v sice, Slovakia}

\begin{abstract}
We introduce a~\textit{Choquet-Sugeno-like operator} generalizing  many operators for bounded functions and monotone measures from the literature, e.g., Sugeno-like operator, Lov\'{a}sz and Owen measure extensions, $\rF$-decomposition integral with respect to a~partition decomposition system, and others. The new operator is based on the concepts of dependence relation and conditional aggregation operators, but it does not depend on $t$-level sets. We also provide conditions for which the  Choquet-Sugeno-like operator coincides with  some Choquet-like integrals defined on finite spaces and appeared recently in the literature, e.g. reverse Choquet integral, $d$-Choquet integral, $\rF$-based discrete Choquet-like integral, some version of $C_{\rF_1\rF_2}$-integral, $\mathrm{C}\mathrm{C}$-integrals (or Choquet-like Copula-based integral) and discrete inclusion-exclusion integral. Some basic properties of the Choquet-Sugeno-like operator are studied.
\end{abstract}


\begin{keyword}
 Choquet integral; Sugeno integral; Conditional aggregation operator; Monotone measure; Decomposition integral; M\"obius transform
\end{keyword}
\end{frontmatter}

\section{Introduction}



The origin of investigation of nonadditive integrals goes back to the works of Vitali, Choquet, Shilkret and Sugeno. 
Their works have been extensively studied and several generalizations have been proposed in recent years grouping the mentioned integrals into two main classes: depending on $t$-level set and independent on it. The first group includes universal integral \cite{klement10}, upper and lower $n$-Sugeno integral \cite{BHoHK21}, whereas the second group includes copula-based integrals \cite{klement10,lucca},  $\rF$-decomposition integrals \cite{horanska2020} and inclusion-exclusion integral \cite{Honda2017}. Obviously there are some functionals belonging to both groups such as seminormed fuzzy integrals \cite{suarez} and the prominent nonadditive integrals of Choquet \cite{choquet} and Sugeno \cite{sug74}.
Especially, generalizations of the discrete Choquet integral  have recently attracted the greatest interest.
The discrete Choquet integral can be equivalently expressed as follows 
\begin{align}
(C)\intl_X f\md\mu&=\sum_{i=1}^n \f{i} \cdot(\mu(\bs{i})-\mu(\bs{i+1})),\label{1}
\\&=\sum_{i=1}^n (\f{i}-\f{i-1}) \cdot\mu(\bs{i}), \label{2}
\\&=\sum_{i=1}^n \bigl(\f{i} \cdot \mu(\bs{i})-\f{i-1} \cdot\mu(\bs{i})\bigr),\label{3} 
\\&=\sum_{D\in 2^{X}\setminus\{\emptyset\}} \textsf{Mob}_\mu(D)\cdot \min_{i\in D} f(i), \label{4}
\end{align}
where $X=\{1,2,\dots, n\},$  $\mu$ is a~monotone measure on $2^X,$ integrand $f$ is a~vector with nonnegative entries, 
$(\cdot)\colon X\to X$ is a~permutation such that $0=f_{(0)}\le \f{1}\le\ldots\le \f{n},$ and  $\bs{i} = \{(i),\ldots,(n)\}$ for $i\in \{1,2,\dots,n\}$ with $\bs{n+1}=\emptyset.$ Moreover, $\textsf{Mob}_\mu$ is the M\"{o}bius transform of $\mu.$ 
Replacing the product operation in its standard form~\eqref{1}, equivalent form~\eqref{2}, expanded form~\eqref{3}, and the M\"{o}bius transform form~\eqref{4} by some other fusion functions with appropriate properties 
one can obtain a~resulting aggregation-like function providing various generalizations of the discrete Choquet integral. 
For the present state-of-art of the generalizations of the discrete Choquet integral we refer to~\cite{dimuro2020}. 

As far as we know, there is no unified setting for a~common generalization of all the expressions~\eqref{1}--\eqref{4} of the discrete Choquet integral. In recent paper~\cite{bustince2020} authors write in Conclusion: ``(...) the possible extension of our generalization idea to Choquet integrals expressed in terms of M\"{o}bius transform does not seem so easily achievable, as it is not clear how the information provided by a~restricted dissimilarity function can be included in such representation. Nevertheless, it is worth to mention that, in any case, the equivalence between the different possible representations of the standard Choquet integral will be most probably lost in our more general setting.'' 
Therefore, in this paper we try  to cover some generalizations of expressions~\eqref{1}--\eqref{4} of the discrete Choquet integral in one general formula.
For this purpose we introduce a~Choquet-Sugeno-like operator
(see Definition~\ref{def:choqsug}) independent on $t$-level sets, but depending upon a~conditional aggregation operator and some relation between sets in a~collection. These are two novel ingredients in comparison with the known approaches existing in the literature. Due to these new elements we may provide many well-known and examined operators as well as several new operators not yet studied in the literature. A~detail discussion is included in Examples~\ref{ex:3.3}--\ref{ex:3.8} and in 
Section~\ref{sec:connection}, where we also give relationships between the existing generalizations of discrete Choquet integral and our Choquet-Sugeno-like operator.  As a~by-product, we join the works that simultaneously generalize the Sugeno integral and the Choquet integral on finite sets, similarly to the works \cite{horanska2020,lucca,MesSt19,torra2003}.

The paper is organized as follows. In the forthcoming section  we provide basic notations and definitions we work with. In Section~\ref{sec:choquet}  we give several examples of Choquet-Sugeno-like operator known in the literature.  In the next Section~\ref{sec:connection} we provide the conditions for which our Choquet-Sugeno-like operators coincides with the operators extending the formulas \eqref{1}--\eqref{4}. 
In Section~\ref{sec:property} we examine some basic properties of the Choquet-Sugeno like operator such as monotonicity, homogeneity, subadditivity, convexity and idempotency.

\section{Basic notations}

Let $(X,\Sigma)$ be a~measurable space, where $\Sigma$ is a~$\sigma$-algebra of subsets of $X.$ In what follows, $\Sigma_0=\Sigma\setminus \{\emptyset\}.$
A~{\it monotone} or {\it nonadditive measure} on $\Sigma$ is a~finite nondecreasing set function $\mu\colon \Sigma\to [0,\infty),$ i.e., $\mu(C)\le\mu(D)$ whenever $C\subset D$ with $\mu(\emptyset)=0,$ where ``$\subset$'' and ``$\subseteq$'' denote the proper inclusion and improper inclusion, respectively. 
 We denote the class of all monotone measures  on $(X,\Sigma)$  by $\bM.$ 
If $\mu(X)=1,$ then $\mu$ is called a~\textit{capacity} and $\bM^1$ denotes the set of all capacities. 
For $\mu,\nu\in\bM,$ we write $\mu\le \nu$ whenever $\mu(D)\le \nu(D)$ for any $D\in\Sigma.$ 
For $X=\{1,2,\ldots,n\},$ we say that $\mu\in\bM$ is \textit{symmetric} if the condition $|C|=|D|$ implies $\mu(C)=\mu(D),$ where $|E|$ is the cardinality  of a~set $E.$ Additionally, a~capacity $\mu$ is \textit{symmetric} whenever $\mu(\cdot)=\nu(\cdot)/\nu(X)$ with $\nu\in \bM$ being symmetric. Denote by $\bEM$ the family of all set functions 
$\wmu\colon \Sigma\to (-\infty,\infty)$ with $\wmu(\emptyset)=0.$ 

By $\bF$ denote the set of all $\Sigma$-measurable (measurable, for short) nonnegative bounded functions on $X$ and $\bF^1=\{f\in \bF\colon \textstyle{\sup_{x\in X} f(x)}\le 1\}.$ We write $f\le g$ if $f(x)\le g(x)$ for all $x\in X.$ Increasing [resp. nondecreasing] function  $f\in \bF$ means that $f(x)<f(y)$ [resp. $f(x)\le f(y)$] whenever $x<y.$ We say that a~function $f\in\bF$ is \textit{subadditive} if $f(x+y)\le f(x)+f(y)$ for any $x,y$ such that $x+y\in X.$ For $t\ge 0,$ define the \textit{$t$-level set} of a~function $f\in\bF$ as $\{f\ge t\}=\{x\in X\colon f(x)\ge t\}.$

Let $D_1,D_2\subseteq (-\infty,\infty).$  We say that an operation $\circ\colon D_1\times D_2\to [0,\infty)$ is \textit{nondecreasing} if $a\circ b\le c\circ d$ for any $(a,b), (c,d)\in (D_1,D_2)$ such that $a\le c$ and $b\le d.$  We say that $x\mapsto x\circ b$ is \textit{subadditive} for any $b$ if $(x+y)\circ b\le x\circ b+y\circ b$ for any $b.$ In a~similar way we define the subadditivity of $x\mapsto a\circ x.$

The set $\{1,\ldots,k\}$  is denoted by $[k].$
Let $\mI{D}$ denote the indicator function of a~set $D,$ that is, $\mI{D}(x)=1$ if $x\in D$ and $\mI{D}(x)=0$ otherwise, and $\mI{}(S)$ denote the indicator function of a~logical sentence $S,$ that is, $\mI{}(S)=1$ if $S$ 
is true and $\mI{}(S)=0$ otherwise. 
For any $a,b\in [0,\infty),$ let  $a\wedge b=\min\{a,b\}$ and $a\vee b=\max \{a,b\}$ as well as $(x)_+=x\vee 0$ for any $x\in (-\infty,\infty).$  Moreover, $\mN=\{1,2,\ldots\}$ denotes the set of  natural numbers and $0_D(x)=0$ for all $x\in D,$ where  $D\in\Sigma.$
We adopt the usual conventions: 
$\textstyle{\sum_{k\in\emptyset} f(k)=0}$ and $\textstyle{\sum_{i=j}^k a_i=0}$ for $k<j.$

A~crucial concept used in this paper is an extension of aggregation functions introduced recently in~\cite{BHHK21}. 

\begin{definition}\label{def:conagr}
 A~map  $\aA[D]{\cdot}\colon \bF\to [0,\infty)$   is said to be a~\textit{conditional aggregation operator} with respect to $D\in\Sigma_0$
if it satisfies the following conditions:
\begin{enumerate}[noitemsep]
    \item[$(C1)$]  $\aA[D]{f}\le \aA[D]{g}$  for any $f,g\in\bF$ such that $f(x)\le g(x)$ for all $x\in D$;
    \item[$(C2)$]  $\aA[D]{\mI{D^c}}=0.$
\end{enumerate}
The nonempty set $D$  will be called a~\textit{conditional set}.
\end{definition}  

From Definition~\ref{def:conagr} it follows that $\aA[D]{f}=\aA[D]{f\mI{D}}$ for any $f\in\bF,$ so the value $\aA[D]{f}$ can be interpreted as  ``an  aggregated value of $f$ on  $D$''. In other words, the conditional aggregation operator only depends on the value of the considered function defined on the conditional set.
Conditional aggregation operator extends the concept of aggregation operator $\agg(\cdot)$ presented by Calvo et al. \cite{Calvo} to all measurable functions.
For some examples and methods of construction of conditional aggregation operators we refer to \cite{BHHK21}\footnote{ Observe that in \cite{BHHK21} the conditional aggregation operator can take infinite values as well. To avoid unnecessary complications, we consider here only finite-valued conditional aggregation operators.}. 
By $\bA=\{\aA{\cdot}\colon D\in \Sigma\}$  we denote a~\textit{family of conditional aggregation operators} (FCA in short).
In order for the FCA to be well defined for all the sets from $\Sigma,$ from now on we consider the conditional aggregation operators with the additional assumption $\aA[\emptyset]{\cdot}=0.$  
Several important FCAs $\bA$ will be highlighted using the superscript such as $\bA^{\inf}=\{\aAi{\cdot}{\inf}\colon D\in\Sigma\}$ and $\bA^{\sup}=\{\aAi{\cdot}{\sup}\colon D\in\Sigma\},$ where  $\aAi{f}{\inf}=\textstyle{\inf_{x\in D} f(x)}$ and $\aAi{f}{\sup}=\textstyle{\sup_{x\in D} f(x)}$ for any $D\in\Sigma_0.$
In order to avoid ambiguity in the markings, we still assume that $\aAi[\emptyset]{f}{\inf}=0=\aAi[\emptyset]{f}{\sup}$ when necessary.

\section{Choquet-Sugeno-like operator and its several special cases}\label{sec:choquet}

In this section we introduce an operator which is based on two families of conditional aggregation operators and a~relation between the conditional sets, which is the main ingredient providing new possibilities. Conditional sets will be chosen from a~\textit{collection} being any subset $\cD\subseteq \Sigma_0.$ 
A~nonempty family $\cH$ of collections will be called a~\textit{decomposition system}, i.e., $\cH\subseteq 2^{\Sigma_0}\setminus\{\emptyset\}$ (cf. \cite{horanska2020}). 
Several decomposition systems often used in the paper are summarized in the following example.

\begin{example}\label{ex:3.1}
\begin{enumerate}[noitemsep]
    \item[]
    \item[(a)] $\cH_{\text{one}}=\{\Sigma_0\}$  is a~singleton consisting of the maximal collection; 
    
    \item[(b)]
    $\cH_{\text{part}}=\{\cP\colon \cP \text{ is a~finite partition of } X\}.$ We say that $\cP$ is a~\textit{finite partition} of $X$ if $\cP=\{D_1,\ldots,D_n\}$ such that $\textstyle{\bigcup_{i=1}^n D_i=X}$ and $D_i\cap D_j=\emptyset$ for any  $i\neq j$ and $D_i\in\Sigma_0$ for any $i$; 
    \item[(c)] 
      $\cH_{\text{chain}}=\{\cD_l\colon l\in\mN\},$ where $\cD_l$ 
      is a~\textit{chain} of length $l$ defined as $\cD_l=\{D_l,\ldots,D_1\},$ where $D_l\subset\ldots\subset D_1$ for any $D_i\in\Sigma_0$ and all $i\in [l].$ For instance, for $X=[2]$ we have $\cH_{\text{chain}}=\big\{ \{\{1\}\},\{\{2\}\},\{\{1,2\}\},\{\{1\},\{1,2\}\},\{\{2\},\{1,2\}\}\big\}.$
      \end{enumerate}
\end{example}

We say that $\cR$ is a~\textit{relation on} $\cD\cup \{\emptyset\}$ if $\cR \subseteq (\cD\cup \{\emptyset\})\times (\cD\cup\{\emptyset\}),$ where $\cD$ is a~collection. For two sets $C,D\in\cD\cup\{\emptyset\}$ being in relation $\cR$ we write $(C,D)\in \cR.$ Although the relation $\cR$ depends on a~collection $\cD,$ we will not indicate this dependence explicitly in the notation.

Now we can define the Choquet-Sugeno-like operator. Note that we do not use the word ``integral'', as there is no unambiguous definition of an integral in the literature, see \cite{BHHK21,cattaneo,klement10}. Since this paper is related to another aspects of this topic, we leave this problem for further discussions.

\begin{definition}\label{def:choqsug}
Let $\cH$ be a~decomposition system, $\cD$ a~collection from $\cH,$ and $\cR$
a~relation on $\cD\cup \{\emptyset\}.$ Then for $\rL\colon [0,\infty)^3\times (-\infty,\infty) \to (-\infty,\infty)$  the \textit{Choquet-Sugeno-like operator}
of $f\in\bF,$ $\mu\in\bM$  and $\wmu\in\bEM$  is defined by
\begin{align}\label{n:cs1} 
    \rCS^{\rL}_{\cH,\bA,\widehat{\bA}}(f,\mu,\wmu)=\sup_{\cD\in \cH}\sum_{(C,D)\in \cR}\rL\big(\aA[C]{f}, \waA[D]{f}, \mu(C),\wmu(D)\big),
\end{align}
where $\bA$ and $\widehat{\bA}$ are FCAs.
\end{definition}

\begin{remark}
 Note that it is sufficient to define $\mu$ and $\wmu$  on $\textstyle{\bigcup_{\cD\in\cH} \cD}$ instead of $\Sigma.$
\end{remark}

Let us underline once again that the relation $\cR$ in~\eqref{n:cs1} depends on a~collection $\cD$ although we do not write it. Now we show that the Choquet-Sugeno-like operator generalizes many concepts from the literature.

\begin{example}[upper Sugeno-like operator]\label{ex:3.3}
Let $\rF\colon [0,\infty)^2\to [0,\infty),$ $\cR=\{(D,D)\colon D\in \cD\}$ for 
$\cD\in \cH=\{\{D\}\colon   D\in \Sigma_0\}$ and $\rL(x,y,z,w)=\rF(x,z).$ Then \eqref{n:cs1} takes the form
    \begin{align}\label{n:sug0}
        \rCS^{\rF}_{\cH,\bA}(f,\mu)=    \sup_{D\in \Sigma_0} \rF(\aA[D]{f}, \mu(D)).
    \end{align}
Putting $\aA{\cdot}=\aAi{\cdot}{\inf}$ for any $E\in \Sigma_0$  we obtain the \textit{upper Sugeno-like operator} 
    \begin{align}\label{n:sug1}
        \suc{f}{\rF}=\sup_{D\in \Sigma_0} \rF(\inf_{x\in D} f(x), \mu(D) ).
    \end{align}
In particular, for $\rF\colon [0,1]^2\to [0,1]$ being a~fuzzy conjunction\footnote{A~binary function $\circ\colon [0,1]^2\to [0,1]$ is called a~\textit{fuzzy conjunction}  if it is nondecreasing and fulfils $0\circ 0=0\circ 1=1\circ 0=0$ and $1\circ 1=1.$} 
the operator \eqref{n:sug1} is the  \textit{$q$-integral} \cite{dubois}, whereas for $\rF\colon [0,1]^2\to [0,1]$ being a~semicopula\footnote{A~binary function $\circ\colon [0,1]^2\to [0,1]$ is called a~\textit{semicopula} if it is nondecreasing and fulfils $1\circ a=a\circ 1=a$ for any $a\in [0,1].$} the \textit{seminormed fuzzy integral} \cite{BHH1,BHH2,BHH3,BHH4,suarez} of $(f,\mu)\in \bF^1\times \bM^1$ is recovered. 
The upper Sugeno-like operator with $\rF=\wedge$ is the famous \textit{Sugeno integral} \cite{sug74}, whereas for $\rF=\cdot$ we get the \textit{Shilkret integral} \cite{shilkret}. If $\rF$ is nondecreasing, arguing as in the proof of \cite[Theorem 2.2]{BHH1}, the upper Sugeno-like operator \eqref{n:sug1} can be rewritten in term of $t$-level sets in the following way
\begin{align}\label{n:sug2}
 \suc{f}{\rF}=\sup_{t\ge 0} \rF(t,\mu(\{ f\ge t\}))
\end{align}
known as the \textit{(upper) generalized Sugeno integral} \cite{cattaneo,KOB_Chebyshev}. 
For $X=[n]$ the operator \eqref{n:sug2} takes the form
\begin{align}\label{n:sug3}
    \suc{f}{\rF}=\max_{i\in [n]} \rF (\f{i}, \mu(\bs{i})),
\end{align}
where  $(\cdot)\colon [n]\to [n]$ is a~permutation such that $\f{1}\le\ldots\le \f{n}$ and $\bs{i}= \{(i),\ldots,(n)\}$ for $i \in [n],$ as $\rF$ is nondecreasing.
The operator presented in  \eqref{n:sug3} was studied by Horansk\'{a} and \v{S}ipo\v{s}ov\'{a} \cite{horanska2018} for  $\Sigma=2^{[n]},$  $\rF\colon [0,1]^2\to [0,1]$  and $(f,\mu)\in \bF^1\times \bM^1.$ 

\end{example}

\begin{example}[generalized Lebesgue integral for sum] \label{ex:3.4}
Let $\rL(x,y,z,w)=\rF(x,z).$
Then \eqref{n:cs1} with $\bA=\bA^{\inf}$ and $\cR=\{(D,D)\colon D\in \cD\}$ for $\cD\in\cH$ can be rewritten as follows 
    \begin{align}\label{n:leb1}
        \rCS^{\rF}_{\cH,\bA^{\inf}}(f,\mu)=\sup_{\cD\in\cH} \sum_{D\in\cD} \rF\big(\aAi[D]{f}{\inf},\mu(D)\big).
    \end{align}
Putting $\rF(x,z)=x\otimes z$ and  $\cH=\cH_{\text{part}}$ in \eqref{n:leb1} we get 
    \begin{align}\label{n:leb2}
        \rCS^{\otimes}_{\cH_{\text{part}},\bA^{\inf}}(f,\mu)=\sup_{ \cD\in \cH_{\text{part}}}\sum_{D\in \cD} \aAi[D]{f}{\inf}\otimes \mu(D)
    \end{align}
the \textit{generalized Lebesgue integral with $\oplus = +$}  defined in \cite[Definition 3.1]{zhang}, where $\otimes$ is a~pseudo-multiplication (see \cite[Definition 2.2]{zhang}). 
Let $\otimes=\cdot$ and $X=(a,b],$ where $a<b$ and $a,b\in (-\infty,\infty).$ Consider
$$
\cH_{\text{part}}^\ast =\{ \{I_1,\ldots,I_k\}\colon k\in \mN\}\subset \cH_{\text{part}},
$$
where $I_i=(x_i,x_{i+1}]$ with the measure $\mu_G(I_i)=G(x_{i+1})-G(x_i)$ for 
a~nondecreasing function $G$ on $[a,b].$ Then \eqref{n:leb2} is the \textit{lower Darboux-Stieltjes integral}  of the form 
    \begin{align*}
        \rCS^{\cdot}_{\cH_{\text{part}}^{\ast},\bA^{\inf}}(f,\mu_G)=\sup_{\cD\in \cH_{\text{part}}^{\ast}}\sum_{I_i\in \cD} \mu_G(I_i)\cdot \inf_{x\in I_i} f(x).
    \end{align*}
Setting $\cH=\cH_{\text{part}}^{\ast\ast}=\{\{X\}\}$ and $\rF(x,z)=x\cdot z$ in \eqref{n:leb1} we get the \textit{min-max integral} \cite[Theorem 4.13]{seliga}     
$$
\rCS^{\cdot}_{\cH_{\text{part}}^{\ast\ast},\bA^{\inf}}(f,\mu) =  \mu(X)\cdot\inf_{x\in X} f(x).
$$
\end{example}


\begin{example}[$\rF$-decomposition integral with respect to $\cH_{\text{part}}$]\label{ex:3.5}

Let $X=[n]$ and $\Sigma=2^{[n]}.$
Observe that the operator \eqref{n:leb1} with $\cH=\cH_{\text{part}}$  and a~nondecreasing function $\rF$ takes the form
   \begin{align}\label{n:pan1}
   \rCS^{\rF}_{\cH_{\text{part}},\bA^{\inf}}(f,\mu)= \mathcal{I}^{\rF}_{\cH_{\text{part}},\mu}(f),
    \end{align}
where
    \begin{align}\label{n:pan2}
    \mathcal{I}^{\rF}_{\cH,\mu}(f)=\sup\Big\{ \sum_{D\in\cD} \rF (a_D,\mu(D))\colon \sum_{D\in \cD} a_D\mI{D}\le f\,\,\cD\in\cH\Big\}
    \end{align}
is an \textit{$\rF$-decomposition integral}. 
Setting $\rF=\cdot$ in \eqref{n:pan1}  we obtain the \textit{Pan-integral} based on the standard arithmetic operation addition and multiplication \cite{yang}.
The operator \eqref{n:pan2} is studied in \cite{horanska2020} for $\rF\colon [0,1]^2\to [0,\infty)$ and $(f,\mu)\in \bF^1\times \bM^1.$

The assumption  $\cH=\cH_{\text{part}}$ in \eqref{n:pan1} is essential to prove the equivalence between formula  \eqref{n:leb1} and \eqref{n:pan1}  for any nondecreasing binary function $\rF$ and any $(f,\mu)\in \bF\times\bM.$ Indeed, let $X=[3]$ and $\cH=\{ \{D_1,D_2,D_3\}\}\neq \cH_{\text{part}},$ where  $D_1=\{1\},$ $D_2=\{1,3\}$ and $D_3=X.$ Assume that $\mu(C)=1$ for any $\emptyset\neq C\subseteq X,$ $f(1)=0.4,$ $f(2)=0.2$ and $f(3)=0.3.$ Then \eqref{n:leb1} with $\rF(x,y)=xy$ takes the form $\rCS^{\cdot}_{\cH,\bA^{\inf}}(f,\mu)=\textstyle{\sum_{i=1}^3 \aAi[D_i]{f}{\inf}} =0.9.$  On the other hand, \eqref{n:pan2} has the form 
    \begin{align*}
       \mathcal{I}^{\cdot}_{\cH,\mu}(f)&= \sup\Big\{\sum_{i=1}^3 a_{D_i}\colon \sum_{i=1}^3 a_{D_i}\mI{D_i}\le f\Big\}
       \le f(1),
    \end{align*}
since the condition $\textstyle{\sum_{i=1}^3 a_{D_i}\mI{D_i}\le f}$ implies that $a_{D_1}+a_{D_2}+a_{D_3}\le f(1),$ and so $\rCS^{\cdot}_{\cH,\bA^{\inf}}(f,\mu)>\mathcal{I}^{\cdot}_{\cH,\mu}(f).$
\end{example}
    

\begin{example}[generalization of the Lov\'{a}sz extension]\label{ex:3.6}
Let $X=[n]$ and  $\Sigma=2^{[n]}.$ Assume that $\cR=\{(D,D)\colon D\in \cD\}$ for $\cD\in\cH_{\text{one}},$ 
$\rL(x,y,z,w)=y\circ w$ and $\wmu(D)=\mob(D)$ with $\mu\in\bM,$ 
where $\mob(D)=\textstyle{\sum_{C\subseteq D} (-1)^{|D\setminus C|} \mu(C)}$ is the~M\"obius transform (\cite[Section 2.10]{grabisch2016}). Then \eqref{n:cs1} is  a~\textit{generalization of the Lov\'asz extension},
\begin{align}\label{n:chm1}  
\rCS^{\circ}_{\cH_{\text{one}},\bA}(f,\mob{})=\sum_{\emptyset \neq D\subseteq X} \aA[D]{f} \circ \mob (D).
\end{align}
In particular, for $\circ=\cdot$ and $\aA[D]{\cdot}=\aAi[D]{\cdot}{\inf}$ we get the \textit{discrete Choquet integral} expressed in terms of the M\"obius transform, known as the \textit{Lov\'asz measure extension} \cite{lovasz}. On the other hand, for the product conditional aggregation operator $\aAi[D]{f}{\textrm{prod}}=\textstyle{\prod_{i\in D}f(i)},$ the formula~\eqref{n:chm1} is the generalized Owen extension of $\mu.$ The original Owen measure extension corresponds to $\sA^{\textrm{prod}}(\cdot|\cdot)$ and $\c=\cdot,$ see~\cite{Owen}. Assume that $\circ\colon [0,1]\times  (-\infty,\infty)\to (-\infty,\infty)$ is bounded on $[0,1]^2,$   $(f,\mu)\in \bF^1\times \bM^1$ and $\aA{h}=\mathsf{Agg}(h \mI{E})$ is an aggregation operator (see \cite{Calvo}). Then the operator \eqref{n:chm1} with
\begin{itemize}[noitemsep]
    \item $\circ=\cdot$ is studied by Koles\'{a}rov\'{a} et al. \cite{kol2012},
    \item $\mathsf{Agg}(h \mI{E})=\textstyle{\inf_{x\in E} h(x)}$ is examined by Fernandez et al. \cite{fernandez20},
    \item $\circ$ with the values in $[0,1]$ is discussed by Horansk\'a \cite{horanska20a}.
\end{itemize}
\end{example}

\begin{example}\label{ex:3.7}
Let $X=[n]$ and $\Sigma=2^{[n]}.$ Put $\cR=\{(D,D^c)\colon D\in \cD\}$ for $\cD\in\cH_{\text{one}},$  
$\rL(x,y,z,w)=(x-y)_+\circ z,$ $\bA=\bA^{\inf}$ and $\widehat{\bA}=\bA^{\sup}.$ Then \eqref{n:cs1} takes the form
\begin{align}\label{n:r1}
    \rCS^{\rL}_{\cH_{\text{one}},\bA^{\inf},\bA^{\sup}}(f,\mu)=\sum_{\emptyset \neq D\subseteq X} (\min_{x\in D} f(x)-\max_{x\in D^c} f(x))_+\circ \mu(D).
\end{align}
For $\circ=\cdot\,,$ we get an alternative representation of the Choquet integral presented in \cite{Jin2018}, see also \cite[p.\,149]{BBC}. 
\end{example}

\begin{example}\label{ex:3.8} (Generalized $p$-variation) 
Let $\cR \subseteq \cD\times \cD$  
and $\rL(x,y,z,w)=|x-y|^p,$ where $p\ge 1.$ 
Then the operator \eqref{n:cs1} with $\bA=\widehat{\bA},$ called  the {\it generalized $p$-variation}, takes the form
\begin{align*}
    {\mathrm V}^p_{\bA}(f)=\sup_{\cD\in \cH}\sum_{(C,D)\in \cR} \big|\aA[C]{f}- \aA[D]{f}\big|^p.
\end{align*}
The well known notion of $p$-variation we get when taking $X=[a,b],$ $\Sigma=2^X,$ $\aA[\{x\}]{f}=f(x)$ and $\cR=\{(\{x_{i-1}\},\{x_i\})\colon \{x_i\}\in \cD,\, i\in [n]\},$ where  $\cD\in \cH=\{\{\{x_0\},\{x_1\},\ldots,\{x_k\}\}\colon a=x_0<x_1<\ldots<x_{n-1}<x_{n}=b,\,n\in\mN\}.$
\end{example}

\begin{remark}
General properties of operators defined in \eqref{n:sug0}, \eqref{n:leb1}, \eqref{n:chm1} and \eqref{n:r1} have not 
been studied in the literature so far.
\end{remark}

\section{Connections with other operators with respect to monotone measure: finite space case}\label{sec:connection}

In \cite{bustince2020, horanska2018,horanska2020, klement10,lucca,lucca2018,lucca2019,meng,MesK} the authors study properties of operators \eqref{1}--\eqref{3}
mainly by replacing the product by another binary function. So, it is natural to find a~connection between them and the Choquet-Sugeno-like operator. We will describe them in this section.  Additionally, we provide a~relationship between the Choquet-Sugeno-like operator and the discrete inclusion-exclusion integral \cite{Honda2017}.

\medskip 

In this section we assume that $X=[n]$ and $\Sigma=2^{[n]}$ with $n\ge 2.$
Moreover, to shorten the notation, we
introduce two relations $\cR^+$ and $\cR^-$ on $\cD_l\cup \{\emptyset\}$ (see Example~\ref{ex:3.1}\,(c)) as follows:
    \begin{itemize}[noitemsep]
        \item $\cR^+=\big\{(D_1,D_{2}),(D_2,D_3),\ldots,(D_{l},D_{l+1})\big\}$;
        \item $\cR^-=\big\{(D_1,D_{0}),(D_2,D_1),\ldots,(D_{l},D_{l-1})\big\},$
    \end{itemize}
where $D_l\subset D_{l-1}\subset \ldots\subset D_1$ and $D_{0}=D_{l+1}=\emptyset.$

\subsection{\textbf{Connection with $\fC_{\circ}^{\mu,\wmu}$}}

Let $\circ\colon [0,\infty)^2\to [0,\infty).$
The operator $\fC_{\circ}^{\mu,\wmu}$ is defined as follows
    \begin{align}\label{n:c1a}
     \fC_{\circ}^{\mu,\wmu}(f)= \sum_{i=1}^n \f{i}\circ (\mu(\bs{i})-\wmu(\bs{i+1}))
    \end{align}
for any $(f,\mu,\wmu)\in \bF\times \bM\times \bM$ such that $\mu\ge\wmu,$ where $(\cdot)\colon [n]\to [n]$ is a~permutation such that $\f{1}\le\ldots\le \f{n},$ $\bs{i}= \{(i),\ldots,(n)\}$ for $i \in [n]$ and $\bs{n+1}=\emptyset.$ 
The operator \eqref{n:c1a} depends on the permutation, which need not be unique in general. Therefore, in order the operator $\fC_{\circ}^{\mu,\wmu}$ to be well defined, its evaluation must not depend on the considered permutation.
Observe that if $f$ is either decreasing  or increasing, then the~permutation is unique. This means that in the class of all decreasing  or increasing functions, the operator $\fC_{\circ}^{\mu,\wmu}$  is well defined for any binary operation $\circ$ and $\mu,\wmu\in\bM.$ 
If we assume that  the map $x\mapsto a\circ x$ is Lebesgue measurable for any $a,$ then using  the same argument as Horansk\'a and \v{S}ipo\v{s}ov\'a \cite[Proposition 12]{horanska2018} one can show that the operator \eqref{n:c1a} is well defined for any $(f,\mu,\wmu)\in \bF\times \bM\times \bM$ if and only if  $a\circ b=g(a)b$ for some function $g\colon [0,\infty)\to [0,\infty).$ 
Also the special case of \eqref{n:c1a} with $\circ\colon [0,1]^2\to [0,1],$ $f\in \bF^1$ and $\mu=\wmu\in \bM^1$ is examined in \cite{horanska2018}.

\begin{proposition}\label{pro:4.1}
The operator \eqref{n:c1a} is well defined for any $f\in\bF$ and any binary operation $\circ\colon [0,\infty)^2\to [0,\infty)$ whenever $\mu,\wmu\in\bM$ are symmetric such that $\mu\ge \wmu.$
\end{proposition}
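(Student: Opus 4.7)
The plan is to show that the value of the expression \eqref{n:c1a} does not depend on the choice of the nondecreasing-sorting permutation $(\cdot)$ when $\mu$ and $\wmu$ are symmetric. To begin, I would note that the assumption $\mu\ge\wmu$ together with monotonicity yields $\mu(\bs{i})\ge\mu(\bs{i+1})\ge\wmu(\bs{i+1})$, so the second argument of $\circ$ lies in $[0,\infty)$ and $\circ\colon[0,\infty)^2\to[0,\infty)$ is applicable term by term; only well-definedness modulo the permutation remains.

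Next, I would reduce the problem to adjacent transpositions of tied values. Any two permutations $\sigma,\tau\colon[n]\to[n]$ with $f\circ\sigma$ and $f\circ\tau$ both nondecreasing induce the same ordered partition of $[n]$ into level sets of $f$, so $\tau^{-1}\sigma$ permutes each level set internally. Writing this element of the symmetric group as a product of transpositions of neighbouring elements inside a common level set, it suffices to prove $S(\sigma)=S(\tau)$ when $\tau=\sigma\circ(i,i+1)$ with $f(\sigma(i))=f(\sigma(i+1))$, where $S(\pi)$ denotes the right-hand side of \eqref{n:c1a} evaluated with permutation $\pi$.

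For such an adjacent transposition, a direct comparison of the two sums term by term is the core step. First, $f(\sigma(j))=f(\tau(j))$ for every $j\in[n]$: this is obvious for $j\notin\{i,i+1\}$, and for $j\in\{i,i+1\}$ it follows from $f(\sigma(i))=f(\sigma(i+1))$ together with $\tau(i)=\sigma(i+1)$, $\tau(i+1)=\sigma(i)$. Second, for $j\ne i+1$ the sets $\bs{j}^{\sigma}=\{\sigma(j),\ldots,\sigma(n)\}$ and $\bs{j}^{\tau}=\{\tau(j),\ldots,\tau(n)\}$ coincide as unordered sets, while for $j=i+1$ the sets $\bs{i+1}^{\sigma}$ and $\bs{i+1}^{\tau}$ merely differ in one element (they share the same cardinality $n-i$). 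Invoking symmetry of $\mu$ and $\wmu$ gives $\mu(\bs{i+1}^{\sigma})=\mu(\bs{i+1}^{\tau})$ and $\wmu(\bs{i+1}^{\sigma})=\wmu(\bs{i+1}^{\tau})$, so the arguments $\mu(\bs{j})-\wmu(\bs{j+1})$ agree for every $j$. Hence each summand in $S(\sigma)$ coincides with the corresponding summand in $S(\tau)$, which yields $S(\sigma)=S(\tau)$.

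The only mildly delicate point is the reduction step: one must argue cleanly that two sorting permutations differ only by permutations within level sets of $f$, and that such a rearrangement can be realised by adjacent swaps, each remaining a sorting permutation. Once this bubble-sort-type observation is in place, the rest is a bookkeeping check. No further hypotheses on $\circ$ beyond its stated domain and codomain are needed, because the argument compares matching terms rather than manipulating $\circ$ algebraically.
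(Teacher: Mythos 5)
Your proof is correct. The paper states Proposition~\ref{pro:4.1} without any proof, so there is nothing to compare against; your argument---reducing to adjacent transpositions inside level sets of $f$, observing that only the single set $\bs{i+1}$ changes (and only by exchanging one element, preserving cardinality), and then invoking symmetry of $\mu$ and $\wmu$---is the natural one and supplies exactly the detail the authors leave implicit. The bookkeeping on which sets $\bs{j}$ are affected and the observation that no property of $\circ$ beyond its domain is needed are both accurate.
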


In terms of conditional aggregation operators the operator \eqref{n:c1a} can be rewritten as
\begin{align}\label{n:c1ab}
    \fC_{\circ}^{\mu,\wmu}(f)= \sum_{i=1}^n \aAi[\bs{i}]{f}{\inf}\circ (\mu(\bs{i})-\wmu(\bs{i+1})).
\end{align}
This form resembles the Choquet-Sugeno-like operator \eqref{n:cs1} with $\cH=\cH_{\text{chain}},$ $\cR=\cR^+,$ 
$\bA=\bA^{\inf},$ $\mu,\wmu\in\bM$ such that $\mu\ge \wmu$ and $\rL(x,y,z,w)=x\circ (z-w)_+,$  i.e.,
    \begin{align}\label{n:c1b}
        \rCS^{\rL}_{\cH_{\text{chain}},\bA^{\inf}}(f,\mu,\wmu)      =\sup_{\cD\in\cH_{\text{chain}}} \sum_{i=1}^l \aAi[D_{i}]{f}{\inf} \circ (\mu(D_{i})-\wmu(D_{i+1}))
    \end{align}
    under the convention that $D_{l+1}=\emptyset.$
However, the following example demonstrates that the operators \eqref{n:c1ab} and \eqref{n:c1b} are not the same.

\begin{example}\label{ex:4.2}
Put $X=[2],$ $f(1)=0.5,$ $f(2)=1,$  $\mu(X)=1,$ $\mu(\{1\})=0.5$ and  $\mu(\{2\})=0.4.$ 
By \eqref{n:c1a} with $\circ=\wedge$ and $\wmu=\mu$ we get
    \begin{align*}
    \fC^{\mu,\mu}_{\wedge}(f)&=\sum_{i=1}^2 \f{i}\wedge (\mu(\bs{i})-\mu(\bs{i+1}))
    \\&=0.5\wedge (\mu(X)-\mu(\{2\}))+1\wedge \mu(\{2\})=0.9.
    \end{align*}
    Considering $\rL(x,y,z,w)=x\wedge (z-w)_+$ and the chain  $\cD_2=\{D_2,D_1\}$ with $D_2=\{1\}$ and $D_1=X$  we obtain
    \begin{align*}
       \rCS^{\rL}_{\cH_{\text{chain}},\bA^{\inf}}(f,\mu,\mu)&\ge \aAi[X]{f}{\inf} \wedge (\mu(X)-\mu(\{1\}))+\aAi[\{1\}]{f}{\inf} \wedge \mu(\{1\})=1>\fC^{\mu,\mu}_{\wedge}(f).
    \end{align*}
In consequence, operators \eqref{n:c1ab} and \eqref{n:c1b} are different.
\end{example}

\noi Reason for the non-equivalence between operators \eqref{n:c1ab} and \eqref{n:c1b} is the lack of symmetry of monotone measure.
The following result provides a~necessary and sufficient condition under which both operators are equal.

\begin{tw}\label{tw:4.3}
Assume that $\circ\colon [0,\infty)^2\to [0,\infty)$ is nondecreasing. 
Then  the operator defined in  \eqref{n:c1b} coincides with $\fC^{\mu,\wmu}_\circ (f)$  for any $f\in\bF$ and any symmetric $\mu,\wmu\in\bM$ such that $\mu\ge \wmu$ if and only if  the function $x\mapsto a\circ x$ is subadditive for any $a.$ 
\end{tw}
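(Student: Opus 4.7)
The plan is to prove both implications. Throughout, the inequality $\rCS^{\rL}_{\cH_{\text{chain}},\bA^{\inf}}(f,\mu,\wmu) \ge \fC^{\mu,\wmu}_\circ(f)$ is immediate by plugging the distinguished chain $\bs{n} \subset \bs{n-1} \subset \ldots \subset \bs{1}$ into the supremum in \eqref{n:c1b}: on this chain $\aAi[\bs{i}]{f}{\inf} = \f{i}$ and the convention $\bs{n+1} = \emptyset$ recover exactly $\fC^{\mu,\wmu}_\circ(f)$ as written in \eqref{n:c1ab}. The content of the theorem is therefore the reverse inequality under subadditivity, and the extraction of subadditivity from equality.

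For the ``if'' direction, I would assume $x \mapsto a \circ x$ is subadditive for every $a$ and show that every chain $D_1 \supset \ldots \supset D_l$ contributes at most $\fC^{\mu,\wmu}_\circ(f)$. First, using symmetry of $\mu,\wmu$ together with monotonicity of $\circ$, I would reduce to chains of the form $\bs{j_1} \supset \ldots \supset \bs{j_l}$ with $1 \le j_1 < \ldots < j_l \le n$: setting $j_i = n - |D_i| + 1$ gives $\mu(D_i) = \mu(\bs{j_i})$ and $\wmu(D_i) = \wmu(\bs{j_i})$ by symmetry, while $\aAi[D_i]{f}{\inf} \le \f{j_i}$ since $\f{j_i}$ is the largest possible value of $\min_{x \in S} f(x)$ over $S$ with $|S| = |D_i|$. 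Next I would use the telescoping identity
\begin{equation*}
\mu(\bs{j_i}) - \wmu(\bs{j_{i+1}}) = \sum_{k=j_i}^{j_{i+1}-1} \big(\mu(\bs{k}) - \wmu(\bs{k+1})\big) - \sum_{k=j_i+1}^{j_{i+1}-1} \big(\mu(\bs{k}) - \wmu(\bs{k})\big),
\end{equation*}
whose second sum is non-negative because $\mu \ge \wmu$, to obtain $\mu(\bs{j_i}) - \wmu(\bs{j_{i+1}}) \le \sum_{k=j_i}^{j_{i+1}-1}(\mu(\bs{k}) - \wmu(\bs{k+1}))$. Chaining monotonicity of $\circ$, subadditivity of $x \mapsto \f{j_i} \circ x$ applied to this sum of non-negative terms, and $\f{j_i} \le \f{k}$ for $k \ge j_i$ with monotonicity of $\circ$ once more, each chain summand is bounded by $\sum_{k=j_i}^{j_{i+1}-1} \f{k} \circ (\mu(\bs{k}) - \wmu(\bs{k+1}))$. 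Summing over $i$, the disjoint ranges $\{j_i,\ldots,j_{i+1}-1\}$ cover $\{j_1,\ldots,n\} \subseteq [n]$, so the total is at most $\fC^{\mu,\wmu}_\circ(f)$ after enlarging to all of $[n]$ (only non-negative summands are added).

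For the ``only if'' direction, I would construct a minimal witness on $X = [2]$ isolating subadditivity. Take $f \equiv a$ constant and $\mu = \wmu$ symmetric with $\mu(\{1\}) = \mu(\{2\}) = x$ and $\mu(X) = x + y$ for arbitrary $x, y \ge 0$. A direct computation gives $\fC^{\mu,\wmu}_\circ(f) = a \circ y + a \circ x$, whereas the singleton chain $\cD = \{X\}$ already contributes $a \circ (x+y)$ to the supremum in \eqref{n:c1b}. Equality between the two operators therefore forces $a \circ (x+y) \le a \circ x + a \circ y$, which, as $a,x,y$ range freely over $[0,\infty)$, is exactly the subadditivity of $x \mapsto a \circ x$.

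The main obstacle is the telescoping step in the ``if'' direction: because $\mu \neq \wmu$ in general, the naive difference $\sum_k (\mu(\bs{k}) - \wmu(\bs{k+1}))$ overshoots $\mu(\bs{j_i}) - \wmu(\bs{j_{i+1}})$ by an error term $\sum_k (\mu(\bs{k}) - \wmu(\bs{k}))$ that is non-negative precisely thanks to $\mu \ge \wmu$, and recognizing that this error can simply be discarded is what makes subadditivity sufficient. The symmetry hypothesis is also used twice: to align an arbitrary chain with the canonical $\{\bs{j}\}$-chain (its necessity being confirmed by Example~\ref{ex:4.2}), and implicitly via Proposition~\ref{pro:4.1} to guarantee that $\fC^{\mu,\wmu}_\circ$ is well defined.
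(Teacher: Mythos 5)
Your sufficiency argument is correct and, in substance, the same as the paper's: the paper also reduces an arbitrary chain to the canonical one $\bs{n}\subset\ldots\subset\bs{1}$ using symmetry of $\mu,\wmu$ (it chooses indices $k_i$ with $|D_i|=|\bs{k_i}|$ and $\fs{D_i}\le \f{k_i}$) and then applies monotonicity of $\circ$ together with subadditivity of $x\mapsto a\circ x$ term by term. Your explicit telescoping identity, with the nonnegative error $\sum_{k}(\mu(\bs{k})-\wmu(\bs{k}))$ discarded thanks to $\mu\ge\wmu$, is a cleaner packaging of the paper's $(k_{i+1}-k_i-1)$-fold peeling step; the two are interchangeable.

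The necessity direction, however, has a genuine gap. The theorem is set on $X=[n]$ with arbitrary $n\ge 2$, and your witness lives on $[2]$ only. For $n\ge 3$, taking $f\equiv a$ and your symmetric $\mu=\wmu$, the sum defining $\fC^{\mu,\mu}_{\circ}(f)$ has $n$ terms, of which $n-2$ equal $a\circ 0$; comparing with the length-one chain $\{X\}$ therefore yields only
\begin{align*}
a\circ(x+y)\le a\circ x+a\circ y+(n-2)\,(a\circ 0),
\end{align*}
and $a\circ 0$ need not vanish (e.g.\ $a\circ b=a+b+1$ is nondecreasing with $a\circ 0>0$), so subadditivity does not follow from this inequality. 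The fix is small: instead of the singleton chain, compare $\fC^{\mu,\mu}_{\circ}(f)$ with the chain $\bs{n-1}\subset\ldots\subset\bs{1}$ of length $n-1$ (i.e.\ the canonical chain with the bottom level deleted). Its value is $\sum_{i=1}^{n-2}a\circ(\mu(\bs{i})-\mu(\bs{i+1}))+a\circ\mu(\bs{n-1})$, so after cancelling the $n-2$ common terms against $\fC^{\mu,\mu}_{\circ}(f)$ one gets exactly $a\circ\mu(\bs{n-1})\le a\circ(\mu(\bs{n-1})-\mu(\bs{n}))+a\circ\mu(\bs{n})$, i.e.\ $a\circ(x+y)\le a\circ x+a\circ y$ for arbitrary $x,y\ge 0$. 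For what it is worth, your constant-function witness is genuinely different from the paper's (which uses $f=b\mI{\{2,\ldots,n\}}$ against a two-element chain) and has the merit of working at $n=2$, where the paper's choice degenerates; but as written it does not close the case $n\ge 3$.
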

\begin{proof}
The value of $\fC^{\mu,\wmu}_{\circ}(f)$ for any $f\in \bF$ does not depend on permutations, since $\mu,\wmu$ are symmetric (see Proposition~\ref{pro:4.1}). \\
\noi  ``$\Rightarrow$''    Let  
$f=b\mI{\{2,\ldots,n\}},$ where $b\ge 0.$ Clearly, 
$$
\fC^{\mu,\mu}_\circ (f)=0\circ (\mu(\bs{1})-\mu(\bs{2}))+\sum _{i=2}^n b\circ (\mu(\bs{i})-\mu (\bs{i+1}))
$$
for any symmetric $\mu\in\bM,$ where $\bs{n+1}=\emptyset.$ 
The operator \eqref{n:c1b} coincides with $\fC^{\mu,\mu}_{\circ},$  so  for the chain $\cD_2=\{D_2,D_1\}$ with $D_2=\bs{2}$ and $D_1=\bs{1}$ we get
$$
0\circ (\mu(\bs{1})-\mu(\bs{2}))+b\circ \mu(\bs{2})\le \fC^{\mu,\mu}_\circ (f)
$$
for any symmetric $\mu\in\bM.$
In consequence, we have
\begin{align*}
    b\circ \sum_{i=2}^n (\mu(\bs{i})-\mu(\bs{i+1}))\le \sum _{i=2}^n b\circ (\mu(\bs{i})-\mu (\bs{i+1}))
\end{align*}
for any symmetric $\mu\in\bM.$  This implies that 
$x\circ (y+z)\le x\circ y+x\circ z$ for any $x,y,z.$

 \noi  ``$\Leftarrow$''
Since 
$\{\bs{n},\ldots,\bs{1}\}$ is a~chain,  by \eqref{n:c1ab}  we have
  \begin{align*}
   \sup_{\cD\in\cH_{\text{chain}}} \sum_{i=1}^l \aAi[D_{i}]{f}{\inf} \circ (\mu(D_{i})-\wmu(D_{i+1}))\ge \fC^{\mu,\wmu}_\circ (f).
   \end{align*}
   To prove the statement we will show that
\begin{align*}
    L&:=\sum_{i=1}^l \aAi[D_{i}]{f}{\inf} \circ (\mu(D_{i})-\wmu(D_{i+1}))  \le \fC^{\mu,\wmu}_\circ (f)
\end{align*}
    for any $\cD_l\in\cH_{\text{chain}}$ such that $l\in [n].$
    For the sake of clarity of presentation, we will consider separately the case of a~chain consisting of one nonempty set.

 \begin{itemize}
   \item Let $\cD_1=\{D_1\}\in\cH_{\text{chain}}.$    Then $L=\fs{D_1} \circ \mu(D_1),$  as $D_2=\emptyset,$ due to the  convention. Since $X$ is a~finite set, so $\{k\colon \fs{D_1}=f_{(k)}\}\neq\emptyset.$ Let $k_1=\min\{k\colon \fs{D_1}=f_{(k)}\}.$ Thus $D_1\subseteq \bs{k_1}$ and  by monotonicity of $\circ$ and subadditivity of $x\mapsto a\circ x$ we get
    \begin{align*}
        L&\le \f{k_1} \circ \mu(\bs{k_1})=\f{k_1}\circ \big( \mu(\bs{k_1})-\wmu(\bs{k_1+1})+\wmu(\bs{k_1+1})\big)
        \\&\le  \f{k_1}\circ ( \mu(\bs{k_1})-\wmu(\bs{k_1+1}))+\f{k_1}\circ\wmu(\bs{k_1+1}).
      \end{align*}
    Since $(\f{i})_i$ is a~nondecreasing sequence and $\wmu\le \mu,$ we get
    \begin{align*}
        L&\le \f{k_1}\circ ( \mu(\bs{k_1})-\wmu(\bs{k_1+1}))+\f{k_1+1}\circ\mu(\bs{k_1+1})
        \\&\le \ldots\le \sum_{i=k_1}^n \f{i}\circ ( \mu(\bs{i})-\wmu(\bs{i+1}))\le \fC^{\mu,\wmu}_\circ (f).
    \end{align*}
    \item Let $\cD_l\in \cH_{\text{chain}}$ for fixed $2\le l\le n.$   Then $L=S_1,$ where
        \begin{align*}
         S_k=\sum_{i=k}^l\fs{D_i} \circ (\mu(D_i)-\wmu(D_{i+1}))
        \end{align*}
   for $k\in [l]$ with the convention $D_{l+1}=\emptyset.$ 
   Let $k_1=\min\{k\colon \fs{D_1}=f_{(k)}\}.$ Then $D_1\subseteq \bs{k_1}$ and 
         \begin{align}\label{n:c1d}
            L&\le\f{k_1}\circ (\mu(\bs{k_1})-\wmu(D_2))+S_2.
         \end{align}
    Note that $D_2\subset \bs{k_1}$ and $|D_2|<|D_1|.$ Thus $|D_2|<|\bs{k_1}|$ and $|D_2|\le |\bs{k_1+1}|,$ as $|\bs{k_1+1}|+1=|\bs{k_1}|.$ Then there exists $k_2> k_1$  such that $|D_2|=|\bs{k_2}|$ and $\fs{D_2}\le \fs{\bs{k_2}}=\f{k_2}.$ Indeed, if $D_2=\bs{k_2},$ then $\fs{D_2}=\fs{\bs{k_2}}.$  If $D_2\neq \bs{k_2},$ then there exists $j$ such that $j\in D_2$ and $j\notin \bs{k_2}$  as $|D_2|=|\bs{k_2}|.$
    Observe that $\bs{k_1}=\{(k_1),\ldots,(k_2-1)\}\cup \bs{k_2}$ and $f(i)\le \fs{\bs{k_2}}$ for any $i\in \{(k_1),\ldots,(k_2-1)\},$ 
   so    $f(j)\le \fs{\bs{k_2}},$ as $j\in \bs{k_1}\setminus \bs{k_2}.$ In consequence  $\fs{D_2}\le \fs{\bs{k_2}}.$ 
    By \eqref{n:c1d}  and the symmetricity of $\mu$ and $\wmu,$ we have
    \begin{align*}
        L&\le \f{k_1}\circ (\mu(\bs{k_1})-\wmu(\bs{k_2}))+\f{k_2}\circ (\mu(\bs{k_2})-\wmu(D_3))+S_3.
    \end{align*}
    Using  $(k_2-k_1-1)$ times monotonicity of $\circ$ and subadditivity of $x\mapsto a\circ x,$  we get
    \begin{align*}
        L&\le  \sum_{i=k_1}^{k_2-1} \f{i}\circ (\mu(\bs{i})-\wmu(\bs{i+1}))+\f{k_2}\circ (\mu(\bs{k_2})-\wmu(D_3))+S_3.
    \end{align*}
    Repeating the same arguments for $D_3,\ldots,D_{l}$ leads to the desired inequality 
    \begin{align*}
    L&\le  \sum_{i=k_1}^{k_{l}-1} \f{i}\circ (\mu(\bs{i})-\wmu(\bs{i+1}))+\f{k_l}\circ \mu(\bs{k_{l}})\le \fC^{\mu,\wmu}_\circ (f).   \end{align*}
\end{itemize} 
This completes the proof.
\end{proof}

\begin{remark}
The assumption on symmetricity of $\mu\in\bM$ cannot be omitted (see Example~\ref{ex:4.2}, where $x\mapsto a\wedge x$ is subadditive for any $a\ge 0$).
\end{remark}

In a~similar way to the proof of Theorem~\ref{tw:4.3} one can show that if $\circ\colon [0,1]^2\to [0,1]$ is nondecreasing, then  the operator defined in  \eqref{n:c1b} coincides with $\fC^{\mu,\wmu}_\circ (f)$  for any $f\in\bF^1$ and any symmetric  capacities $\mu,\wmu$ such that $\mu\ge \wmu$ if and only if  the function $x\mapsto a\circ x$ is subadditive for any $a\in [0,1].$  
  The symmetric capacities $\mu$ appear in the natural way when dealing with the order statistics \cite{hoeffding,renyi} from discrete probability distributions. This is due to the fact that the operator \eqref{n:c1a} with $\circ=\cdot$ and symmetric capacity $\mu=\wmu$ is the Choquet integral being a~generalization of the OWA operator\footnote{R.R. Yager in \cite{yager} defined the ordered weighted averaging  (OWA) operator as an expected value of order statistics for random variables defined in finite possible outcomes.}. 

\medskip
Now we show that the operator $\fC^{\mu,\mu}_{\circ}$ can be rewritten in terms of $t$-level sets.

\begin{proposition}\label{pro:4.6}
If  $a\circ b=g(a)b$ for some nonnegative function $g$ with $g(0)=0,$
then 
\begin{align}\label{n:c1e}
    \fC_{\circ}^{\mu,\mu}(f) = \sum_{i=1}^n \f{i} \circ \big( \mu(\{f\ge \f{i}\})-\mu(\{f\ge \f{i+1}\})\big)
\end{align}
holds for any $(f,\mu)\in\bF\times \bM,$ where $(\cdot)$ is a~permutation such that $\f{1}\le\ldots\le \f{n}$  under the convention  $\{f\ge \f{n+1}\}=\emptyset.$
\end{proposition}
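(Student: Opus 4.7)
The plan is to exploit the hypothesis $a\circ b=g(a)b$ to reduce both sides of \eqref{n:c1e} to ordinary real-valued sums, and then verify the equality by partitioning $[n]$ into maximal runs of consecutive indices on which $\f{i}$ is constant. On the left-hand side the block sum will telescope; on the right-hand side, all interior terms of a block will vanish since $\{f\ge \f{k}\}=\{f\ge\f{k+1}\}$ whenever $\f{k}=\f{k+1}$, so only one term per block will survive. Matching the two surviving block contributions will then give the identity.

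Concretely, I would fix any permutation sorting $f$, and for a maximal run $[i,j]\subseteq[n]$ with $\f{i}=\cdots=\f{j}=:v$ evaluate both sides restricted to this block. Telescoping handles the left, giving
\begin{align*}
g(v)\sum_{k=i}^{j}\bigl(\mu(\bs{k})-\mu(\bs{k+1})\bigr)=g(v)\bigl(\mu(\bs{i})-\mu(\bs{j+1})\bigr).
\end{align*}
On the right, only the term $k=j$ survives and equals $g(v)\bigl(\mu(\{f\ge v\})-\mu(\{f\ge \f{j+1}\})\bigr)$, with the convention $\{f\ge\f{n+1}\}=\emptyset$ covering the case $j=n$. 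The remaining task is to verify the two set identities $\{f\ge v\}=\bs{i}$ and (for $j<n$) $\{f\ge\f{j+1}\}=\bs{j+1}$, which follow directly from the sorting together with the maximality of the block: since $(1),\ldots,(i-1)$ all satisfy $f<v$, the indices with $f$-value at least $v$ are exactly $(i),\ldots,(n)$, and the analogous statement applied to the next block gives the second identity. Summing over all blocks then produces \eqref{n:c1e}.

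The only point of caution is the handling of ties. When $f$ has repeated values the permutation is not unique, so $\bs{k}$ may a priori depend on the choice of permutation, whereas the right-hand side of \eqref{n:c1e} is permutation-free; the block decomposition sidesteps this because the block endpoints $\bs{i}$ and $\bs{j+1}$ correspond to positions of strict jumps in the sorted sequence and are therefore permutation-independent. The assumption $g(0)=0$ is exactly what ensures that a possible initial block with $v=0$ contributes zero on either side, so no separate boundary analysis is required.
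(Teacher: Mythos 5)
Your proof is correct, and it reaches the identity by a genuinely different route than the paper. The paper applies the Abel transformation (summation by parts) twice: it first rewrites $\fC_{\circ}^{\mu,\mu}(f)$ as $\sum_{i=1}^n (g(\f{i})-g(\f{i-1}))\mu(\bs{i})$, observes that the summands with $g(\f{i})=g(\f{i-1})$ vanish while for the remaining indices $\bs{i}=\{f\ge \f{i}\}$, and then transforms back; moreover it carries this out explicitly only for a single block of tied values and dismisses the general configuration with ``the other cases can be treated similarly.'' Your block decomposition avoids summation by parts altogether and treats an arbitrary tie structure uniformly: telescoping the left-hand side within each maximal constant block and killing the interior terms on the right reduces everything to the two permutation-independent boundary identities $\{f\ge v\}=\bs{i}$ and $\{f\ge \f{j+1}\}=\bs{j+1}$, which you justify correctly from maximality of the block. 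In this sense your argument is more self-contained and complete than the published one. One small inaccuracy in your closing remark: the hypothesis $g(0)=0$ is not what makes a $v=0$ block harmless in your scheme --- the identity $\{f\ge 0\}=X=\bs{1}$ already handles it --- so your proof in fact never uses $g(0)=0$ at all (the paper needs it only to set the initial term $g(f_{(0)})=0$ in its Abel transformation). This does not affect correctness.
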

\begin{proof}
If  $(\f{i})_{i=1}^n$  is an increasing sequence, the statement is obvious. Assume that $\f{1}<\ldots <\f{j}=\ldots=f_{(k)}<\f{k+1}<\ldots<\f{n}$ for some $j<k.$ Then by \eqref{n:c1a} and Abel transformation\footnote{\textit{Abel transformation}: $\textstyle{\sum_{i=1}^n a_i (b_i-b_{i+1})=\sum_{i=1}^n b_i (a_i-a_{i-1})}$ under the convention $b_{n+1}=a_0=0.$} we get
\begin{align*}
    \fC_{\circ}^{\mu,\mu}(f)& 
    = \sum_{i=1}^n (g(\f{i})-g(\f{i-1})) \mu(\bs{i}),
    \end{align*}
where $B_{(n+1)}=\emptyset,$ $g(0)=0$ and $f_{(0)}=0.$ Moreover, $\bs{i}=\{f\ge \f{i}\}$ for any $i\notin\{j+1,\ldots,k\}.$ As $g(f_{(j)})=\ldots=g(f_{(k)}),$ we get
\begin{align*}
    \fC_{\circ}^{\mu,\mu}(f)&=  \sum_{i=1}^j (g(\f{i})-g(\f{i-1})) \mu(\{f\ge \f{i}\})+ \sum_{i=k+1}^n (g(\f{i})-g(\f{i-1})) \mu(\{f\ge \f{i}\})
    \\&= \sum_{i=1}^n (g(\f{i})-g(\f{i-1})) \mu(\{f\ge \f{i}\})   =\sum_{i=1}^n g(\f{i})\big(\mu(\{f\ge \f{i}\})-\mu(\{f\ge \f{i+1}\})\big),
    \end{align*}
where in the last equality the Abel transformation has been used again. The other cases can be treated similarly, so we omit them.
\end{proof}

It is clear that \eqref{n:c1e} is true for any $\mu\in\bM$ and any binary operation $\circ$ whenever $f\in\bF$ is decreasing or increasing. However, it is not possible to obtain \eqref{n:c1e} for any $f\in\bF$ and any binary operation $\circ$ even if $\mu\in\bM$ is symmetric as it is illustrated in the following example.

\begin{example}\label{ex:4.6}
Let $X=[2],$ $f(1)=f(2)=0{.}5$ and $\mu\in\bM$ be symmetric such that $\mu(X)=2$ and $\mu(\{2\})=1=\mu(\{1\}).$ The operator $\fC_{\wedge}^{\mu,\mu}$ does not depend on permutation, so for $\bs{1}=X$ and $|\bs{2}|=1$ we get
\begin{align*}
        \fC_{\wedge}^{\mu,\mu}(f)=0{.}5\wedge (\mu(X)-\mu(\bs{2})) + 0{.}5\wedge \mu(\bs{2})=1.
    \end{align*}
The right-hand side of \eqref{n:c1e} with $\circ=\wedge$ takes the form
    \begin{align*}
        \sum_{i=1}^2 \f{i} \wedge \big( \mu(\{f\ge \f{i}\})-\mu(\{f\ge \f{i+1}\})\big)=0{.}5\wedge 0+0{.}5\wedge 2=0{.}5<\fC_{\wedge}^{\mu,\mu}(f).
    \end{align*}
To sum up, the equality \eqref{n:c1e} does not hold.
\end{example}

\subsection{\textbf{Connection with $\fR\fC^{\mu,\wmu}_{\circ}$}} 

Let $\circ\colon [0,\infty)^2\to [0,\infty).$
Define the operator $\fR\fC^{\mu,\wmu}_{\circ}$ as follows
    \begin{align}\label{n:c5a}
     \fR\fC^{\mu,\wmu}_{\circ} (f)= \sum_{i=1}^n \f{i}\circ (\mu(\cs{i})-\wmu(\cs{i-1}))
    \end{align}
for any $(f,\mu,\wmu)\in \bF\times \bM\times \bM$ such that $\mu\ge\wmu,$  where $(\cdot)\colon [n]\to [n]$ is a~permutation such that $\f{1}\le\ldots\le \f{n}$ and $\cs{i}= \{(1),\ldots,(i)\}$ for $i \in [n]$ with $\cs{0}=\emptyset.$ 
The operator \eqref{n:c5a} is well defined if $f$ is monotone (decreasing or increasing), $\mu,\wmu\in\bM$ are symmetric or $a\circ b=g(a)b$ for some $g.$ Putting $\mu=\wmu\in\bM^1$ and $\circ=\cdot$ in \eqref{n:c5a}, we obtain the \textit{reverse Choquet integral} introduced in \cite{meng}.

Observe that if $\mu([n])=\wmu([n]),$ $\nu(D)=\wmu([n])-\wmu(D^c)$ 
and $\wnu(D)=\mu([n])-\mu(D^c)$ for any $D\in 2^{[n]},$ then 
\begin{align}\label{n:c5b}
    \fR\fC^{\mu,\wmu}_{\circ} (f)&=\fC^{\nu,\wnu}_{\circ}(f).
\end{align}
Hence, Theorem~\ref{tw:4.3}  can also be used to analyze the relationship between the special case of Choquet-Sugeno-like operator and ${\mathfrak R}\fC_{\circ}^{\mu,\wmu}.$

\medskip
\subsection{\textbf{Connection with $\fC^{\circ,\delta}_\mu$}}

\begin{definition}\label{def:dis}
A~function $\delta\colon [0,\infty)^2\to [0,\infty)$ is said to be a~\textit{dissimilarity function} if for all $x,y,z$ the following conditions hold:
\begin{enumerate}[noitemsep]
    \item[(a)] $\delta(x,y)=\delta(y,x)$;
    \item[(b)] $\delta(x,y)= 0$ if and only if  $x= y$;
    \item[(c)] if $x\le y\le z,$ then $\delta(x,y)\le \delta(x,z)$ and $\delta(y,z)\le \delta(x,z).$
    \end{enumerate}
\end{definition}

Let $\circ\colon [0,\infty)^2 \to [0,\infty)$ and $\delta$ be a~dissimilarity function.
Now we define the following operator 
  \begin{align}\label{n:d1a}
    \fC_{\mu}^{\circ,\delta}(f)=\sum_{i=1}^n \delta(\f{i},\f{i-1})\circ\mu(\bs{i})
\end{align}
for any $(f,\mu)\in \bF\times \bM,$ where  $(\cdot)\colon [n]\to [n]$ is a~permutation such that $\f{1}\le\ldots\le \f{n},$  $\bs{i}= \{(i),\ldots,(n)\}$ for $i \in [n]$ under the convention $\f{0}=0.$ 
In order the operator \eqref{n:d1a} would not depend on the permutation it is necessary to assume that
either $0\circ b=0$ for all $b$ (cf. \cite{MesK}) or $f\in\bF$ is monotone (decreasing or increasing) or $\mu\in\bM$ is symmetric. 

The operator \eqref{n:d1a} with $\circ=\cdot,$  $(f,\mu)\in\bF^1\times \bM^1$ and  $\delta\colon [0,1]^2\to [0,1]$ being a~restricted dissimilarity function\footnote{
A~function $\delta\colon [0,1]^2\to [0,1]$ is said to be a~\textit{restricted dissimilarity function} if it satisfies, for all $x,y,z\in [0,1],$ the conditions (a), (b) and (c) from Definition~\ref{def:dis} and $\delta(x,y)=1$ if and only if $\{x,y\}=\{0,1\}$ (see \cite[Definition 2.1]{bustince2020}).} is the \textit{$d$-Choquet integral} defined in \cite{bustince2020}. 
Study of \eqref{n:d1a}  for  $\circ\colon  [0,1]^2\to [0,1],$ $(f,\mu)\in \bF^1\times \bM^1$ and $\delta(x,y)=|x-y|,$ i.e., 
 \begin{align*}
    \fC_{\mu}^{\circ}(f)=\sum_{i=1}^n (\f{i}-\f{i-1})\circ\mu(\bs{i}),
\end{align*}
was conducted by Mesiar et al. \cite{MesK} and call it as \textit{$\circ$-based discrete Choquet-like integral}.
Later, it was redefined by Lucca et al. \cite{lucca2018} in the way $\overline{\fC}_{\mu}^{\circ}(f)=\min\{1,\fC_{\mu}^{\circ}(f)\}$ for any $(f,\mu)\in \bF^1\times \bM^1$ and was used in a~fuzzy rule-based classification system.

\medskip
Similarly as earlier, we give a~relationship between \eqref{n:d1a} and the Choquet-Sugeno-like operator \eqref{n:cs1} with $\cH=\cH_{\text{chain}},$ $\cR=\cR^-,$ $\rL(x,y,z,w)=\delta(x,y)\circ z$ and $\bA=\bA^{\inf}=\widehat{\bA},$ i.e.,
    \begin{align}\label{n:d1b}
        \rCS^{\rL}_{\cH_{\text{chain}},\bA^{\inf},\bA^{\inf}}(f,\mu)=\sup_{\cD\in \cH_{\text{chain}}}\sum_{i=1}^l \delta\big(\aAi[D_i]{f}{\inf},\aAi[D_{i-1}]{f}{\inf}\big)  \circ \mu(D_{i})
     \end{align}
for any $(f,\mu)\in \bF\times \bM$ under the convention $D_0=\emptyset,$ 
where $\circ\colon [0,\infty)^2\to [0,\infty)$ and $\delta$ is a~dissimilarity function.  We split the study of connection between $\fC^{\circ,\delta}_{\mu}$ and \eqref{n:d1b} into two cases: $|X|=2$ and $|X|\ge 3.$

\begin{tw}
Let $X=[2].$ Assume that $\delta$ is a~dissimilarity function  and  $\circ\colon [0,\infty)^2\to [0,\infty)$ is nondecreasing.
\begin{itemize}[noitemsep]
    \item[(i)] The operator  defined in \eqref{n:d1b} coincides with         $\fC_{\mu}^{\circ,\delta}(f)$ for any $f\in\bF$ and any symmetric $\mu\in \bM$ if and only if  
    \begin{align}\label{n:3h}
     \delta(x_2,0)\circ y\le \delta(x_1,0)\circ y+\delta (x_2,x_1)\circ y
    \end{align}
    for any  $0\le x_1\le x_2<\infty$ and any $y\in [0,\infty).$
    \item[(ii)] If $0\circ a=0$ for all $a,$ then the operator  defined in \eqref{n:d1b} coincides with $\fC_{\mu}^{\circ,\delta}(f)$ for any $(f,\mu)\in\bF\times \bM$  if and only if  \eqref{n:3h} is true  for any  $0\le x_1\le x_2<\infty$ and any $y\in [0,\infty).$
\end{itemize}
\end{tw}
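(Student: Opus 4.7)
The decisive structural fact is that $|X|=2$ forces $\cH_{\text{chain}}$ to consist of exactly five chains: the three singletons $\{\{1\}\}$, $\{\{2\}\}$, $\{X\}$ and the two length-two chains $\{\{1\},X\}$ and $\{\{2\},X\}$. I would evaluate the five corresponding summands in \eqref{n:d1b} explicitly and compare each to $\fC^{\circ,\delta}_\mu(f)=\delta(\f{1},0)\circ\mu(X)+\delta(\f{2},\f{1})\circ\mu(\bs{2})$. A direct calculation with $\aAi[\{(2)\}]{f}{\inf}=\f{2}$, $\aAi[X]{f}{\inf}=\f{1}$ and the convention $\aAi[\emptyset]{f}{\inf}=0$ shows that the chain $\{\bs{2},X\}$ realises the value $\fC^{\circ,\delta}_\mu(f)$ exactly. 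Hence $\rCS^{\rL}_{\cH_{\text{chain}},\bA^{\inf},\bA^{\inf}}(f,\mu)\ge\fC^{\circ,\delta}_\mu(f)$ comes for free, and the theorem reduces to whether the remaining four chains stay below $\fC^{\circ,\delta}_\mu(f)$.

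For the direction $(\Leftarrow)$ I would verify the four residual chains case by case. The chains $\{X\}$ and $\{\{(1)\}\}$ yield $\delta(\f{1},0)\circ\mu(X)$ and $\delta(\f{1},0)\circ\mu(\{(1)\})\le\delta(\f{1},0)\circ\mu(X)$ respectively, both dominated by $\fC^{\circ,\delta}_\mu(f)$ by monotonicity of $\circ$ in its second argument. The chain $\{\{(2)\}\}$ yields $\delta(\f{2},0)\circ\mu(\bs{2})$; hypothesis \eqref{n:3h} with $y=\mu(\bs{2})$ splits it into $\delta(\f{1},0)\circ\mu(\bs{2})+\delta(\f{2},\f{1})\circ\mu(\bs{2})$, bounded by $\fC^{\circ,\delta}_\mu(f)$ after enlarging $\mu(\bs{2})\le\mu(X)$ in the first summand via monotonicity of $\circ$. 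The remaining chain $\{\{(1)\},X\}$ yields $\delta(\f{1},0)\circ\mu(X)+0\circ\mu(\{(1)\})$, where the extra term appears because $\delta(\f{1},\f{1})=0$; under (ii) it vanishes by $0\circ a=0$, while under (i) symmetry of $\mu$ forces $\mu(\{(1)\})=\mu(\bs{2})$, and then $0\le\delta(\f{2},\f{1})$ together with monotonicity of $\circ$ in its first argument yields $0\circ\mu(\bs{2})\le\delta(\f{2},\f{1})\circ\mu(\bs{2})$, so the excess is absorbed.

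For the direction $(\Rightarrow)$ I would produce a common witness. Given arbitrary $0\le x_1\le x_2<\infty$ and $y\ge 0$, set $f(1)=x_1$, $f(2)=x_2$, and define $\mu$ by $\mu(\{1\})=\mu(\{2\})=\mu(X)=y$. This $\mu$ is symmetric (hence admissible under (i)), and the construction is equally valid under (ii). The chain $\{\{2\}\}$ contributes $\delta(x_2,0)\circ y$ to $\rCS^{\rL}_{\cH_{\text{chain}},\bA^{\inf},\bA^{\inf}}(f,\mu)$, while $\fC^{\circ,\delta}_\mu(f)=\delta(x_1,0)\circ y+\delta(x_2,x_1)\circ y$. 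The assumed coincidence of the two operators then gives $\delta(x_2,0)\circ y\le\fC^{\circ,\delta}_\mu(f)$, which is precisely \eqref{n:3h}.

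The main obstacle, and the reason for the bifurcation between (i) and (ii), is the chain $\{\{(1)\},X\}$: its residual term $0\circ\mu(\{(1)\})$ is not controlled by \eqref{n:3h} or monotonicity of $\circ$ alone. Either left-absorption $0\circ a=0$ kills the term outright, or symmetry of $\mu$ plus monotonicity of $\circ$ hides it inside the $\delta(\f{2},\f{1})\circ\mu(\bs{2})$ summand of $\fC^{\circ,\delta}_\mu(f)$. Every other step is a routine application of the monotonicity of $\circ$ and of the subadditivity-type inequality \eqref{n:3h}.
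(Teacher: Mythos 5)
Your proposal is correct and follows essentially the same route as the paper: enumerate the five chains of $\cH_{\text{chain}}$ for $X=[2]$, observe that the chain $\{\bs{2},X\}$ realises $\fC^{\circ,\delta}_\mu(f)$ exactly, reduce the equivalence to the single inequality $\delta(x_{(2)},0)\circ\mu(\{(2)\})\le\delta(x_{(1)},0)\circ\mu(X)+\delta(x_{(2)},x_{(1)})\circ\mu(\{(2)\})$, and pass between this and \eqref{n:3h} using monotonicity of $\circ$ (for sufficiency) and the constant witness $\mu\equiv y$ (for necessity). The only difference is presentational: the paper compresses the case analysis into its displayed formula \eqref{n:4a}, whereas you make explicit the domination of the residual chain $\{\{(1)\},X\}$ via symmetry of $\mu$ (case (i)) or $0\circ a=0$ (case (ii)), which is exactly the step the paper's formula absorbs silently.
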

\begin{proof} 
(i) Since $X=[2],$ so $\cH_{\text{chain}}$ is given in Example~\ref{ex:3.1}\,(c).
Let $x_{(1)}=\min\{x_1,x_2\}$ and $x_{(2)}=\max \{x_1,x_2\}.$ Then the operators take the form
\begin{align}
\rCS^{\rL}_{\cH_{\text{chain}},\bA^{\inf},\bA^{\inf}}(f,\mu)&=\max \big\{\delta(x_{(2)},0)\circ \mu(\{(2)\}),\,\delta(x_{(1)},0)\circ \mu(X)+\delta(x_{(2)},x_{(1)})\circ \mu(\{(2)\})\big\},\label{n:4a}\\
\fC_{\mu}^{\circ,\delta} (f)&=\delta(x_{(1)},0)\circ \mu(X)+ \delta(x_{(2)},x_{(1)})\circ \mu(\{(2)\})\notag
\end{align}
for any symmetric $\mu\in\bM.$
Thus, the equality $\rCS^{\rL}_{\cH_{\text{chain}},\bA^{\inf},\bA^{\inf}}(f,\mu)=\fC_{\mu}^{\circ,\delta}(f)$ holds for any $f\in\bF$ and any symmetric $\mu\in\bM$ if and only if
\begin{align*}
    \delta(x_{(2)},0)\circ \mu(\{(2)\})\le \delta(x_{(1)},0)\circ \mu(X)+\delta(x_{(2)},x_{(1)})\circ \mu(\{(2)\})
\end{align*}
for any $0\le x_1\le x_2<\infty$ and any symmetric $\mu.$ As $\mu(\{2\})\le \mu(X)$ and $\circ$ is  nondecreasing, the latter condition is equivalent to $\delta(x_{(2)},0)\circ y\le \delta(x_{(1)},0)\circ y+\delta(x_{(2)},x_{(1)})\circ y$ for any $0\le x_1\le x_2<\infty$  and any $0\le y<\infty,$ which finishes the proof of part (i). 

The proof of part (ii) is similar to the proof of (i), since \eqref{n:4a} holds for any $\mu\in\bM$ in view of $0\circ a=0$ for any $a.$
\end{proof}

\begin{tw}\label{tw:4.13}
Let $|X|\ge 3.$ Assume that $\delta$ is a~dissimilarity function  and  $\circ\colon [0,\infty)^2\to [0,\infty)$ is nondecreasing such that $0\circ 0=0.$
Then the operator  defined in \eqref{n:d1b} coincides with $\fC_{\mu}^{\circ,\delta}(f)$ for any $f\in\bF$ and any  symmetric $\mu\in \bM$ if and only if $\delta(x_3,x_1) \circ y\le \delta(x_2,x_1)\circ y+\delta(x_3,x_2)\circ y$ for any $0\le x_1\le x_2\le x_3<\infty $  and any $y\in [0,\infty).$ 
\end{tw}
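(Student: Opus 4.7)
For the $(\Rightarrow)$ direction, fix $0 \le x_1 \le x_2 \le x_3$ and $y \ge 0$. I would construct a configuration forcing the triangle inequality. On $X=[n]$ take $f$ assuming value $x_1$ at one point, $x_2$ at another, and $x_3$ at the remaining $n-2$ points, so $\f{1}=x_1$, $\f{2}=x_2$, $\f{3}=\cdots=\f{n}=x_3$. Let $\mu$ be the symmetric monotone measure with $\mu(A)=y$ when $|A|\ge n-2$ and $\mu(A)=0$ otherwise. A direct computation gives $\fC_\mu^{\circ,\delta}(f)=\delta(x_1,0)\circ y+\delta(x_2,x_1)\circ y+\delta(x_3,x_2)\circ y$, because all remaining terms collapse to $0\circ 0 = 0$. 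Meanwhile the chain $\{\bs{3},\bs{1}\}\in\cH_{\text{chain}}$ contributes $\delta(x_1,0)\circ y+\delta(x_3,x_1)\circ y$ to the supremum defining the Choquet-Sugeno-like operator. Coincidence forces this chain value to be at most $\fC_\mu^{\circ,\delta}(f)$, and cancelling $\delta(x_1,0)\circ y$ from both sides yields exactly the triangle inequality.

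For the $(\Leftarrow)$ direction, the canonical chain $\{\bs{n},\bs{n-1},\ldots,\bs{1}\}$ lies in $\cH_{\text{chain}}$ and realises $\fC_\mu^{\circ,\delta}(f)$, so the operator is automatically $\ge\fC_\mu^{\circ,\delta}(f)$. The work is the reverse bound. Fix an arbitrary chain $\cD_l=\{D_l\subset\cdots\subset D_1\}$, let $k_i$ satisfy $|D_i|=|\bs{k_i}|$ so that $k_1<\cdots<k_l$, and put $a_i=\aAi[D_i]{f}{\inf}$. Symmetricity of $\mu$ yields $\mu(D_i)=\mu(\bs{k_i})$, the pigeonhole principle gives $a_i\le\f{k_i}$, and the chain structure gives $0=a_0\le a_1\le\cdots\le a_l$. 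For each $i$ the plan is: (i) use axiom (c) of the dissimilarity function on $a_{i-1}\le a_i\le\f{k_i}$ to obtain $\delta(a_i,a_{i-1})\le\delta(\f{k_i},a_{i-1})$; (ii) apply the triangle hypothesis iteratively to $\delta(\f{k_i},a_{i-1})\circ\mu(\bs{k_i})$ along the ladder $a_{i-1}\le\f{k_{i-1}}\le\f{k_{i-1}+1}\le\cdots\le\f{k_i}$; (iii) combine monotonicity of $\mu$ with nondecreasingness of $\circ$ in its second slot to convert each piece $\delta(\f{j},\f{j-1})\circ\mu(\bs{k_i})$ into $\delta(\f{j},\f{j-1})\circ\mu(\bs{j})$ for $k_{i-1}<j\le k_i$. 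Summing over $i\in[l]$ telescopes the index ranges $[k_{i-1}+1,k_i]$ into $[1,k_l]\subseteq[n]$, bounding the total by a partial sum of $\fC_\mu^{\circ,\delta}(f)$.

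The main technical obstacle is step (ii), since splitting $\delta(\f{k_i},a_{i-1})$ via the triangle hypothesis leaves a residual $\delta(\f{k_{i-1}},a_{i-1})\circ\mu(\bs{k_i})$ that need not vanish (because $a_{i-1}$ may lie strictly below $\f{k_{i-1}}$). I would handle it with an auxiliary chain-refinement lemma: inserting any $D^{\ast}$ with $D_p\subsetneq D^{\ast}\subsetneq D_{p-1}$ into a chain does not decrease its associated sum, which follows from the triangle hypothesis together with $\mu(D^{\ast})\ge\mu(D_p)$ and monotonicity of $\circ$. Iterating this lemma reduces the problem to maximal chains where each cardinality $1,\ldots,n$ appears exactly once; the residuals collapse through telescoping together with the equalities $0\circ 0=0$ and $\mu(E_i)=\mu(\bs{i})$, completing $\rCS^{\rL}_{\cH_{\text{chain}},\bA^{\inf},\bA^{\inf}}(f,\mu)\le\fC_\mu^{\circ,\delta}(f)$.
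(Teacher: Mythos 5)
Your ``$\Rightarrow$'' direction is exactly the paper's argument (same test function with values $x_1,x_2$ at two points and $x_3$ at the remaining $n-2$, same symmetric measure equal to $y$ on sets of cardinality at least $n-2$ and $0$ elsewhere, same chain $\{\bs{3},\bs{1}\}$, same cancellation), and it is correct. In the ``$\Leftarrow$'' direction your skeleton also matches the paper's --- matching $D_i$ to $\bs{k_i}$ by cardinality, using $\mu(D_i)=\mu(\bs{k_i})$ and $\fs{D_i}\le \f{k_i}$, splitting along the ladder $\f{k_{i-1}}\le\f{k_{i-1}+1}\le\ldots\le\f{k_i}$ and then shifting the measures by monotonicity of $\circ$ --- and you correctly identify the crux, namely the leftover term $\delta(\f{k_{i-1}},a_{i-1})\circ\mu(\bs{k_i}).$

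The gap is that your proposed fix does not remove that leftover. The refinement lemma itself is sound (though as stated it only covers insertion between consecutive members; appending supersets of $D_1$ needs the same triangle-plus-monotonicity argument and should be said), but the claim that on maximal chains ``the residuals collapse through telescoping'' is false. Take $n=3,$ $f(1)=0,$ $f(2)=5,$ $f(3)=3,$ $\delta(x,y)=|x-y|,$ $\circ=\cdot\,,$ and the maximal chain $X\supset\{1,2\}\supset\{2\},$ so that $a_1=a_2=0,$ $a_3=5,$ while $\f{1}=0,$ $\f{2}=3,$ $\f{3}=5.$ Your scheme yields $L\le \fC_{\mu}^{\circ,\delta}(f)+\delta(\f{2},a_2)\circ\mu(\bs{3})=\fC_{\mu}^{\circ,\delta}(f)+3\mu(\bs{3}),$ a strictly lossy bound whenever $\mu(\bs{3})>0$; and even on the canonical chain the residuals are $0\circ\mu(\bs{i}),$ which need not vanish since only $0\circ 0=0$ is assumed. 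The inequality $L\le \fC_{\mu}^{\circ,\delta}(f)$ does hold in this example, but only because the residual generated at step $i$ is absorbed by the slack between $\delta(\f{i-1},\f{i-2})\circ\mu(\bs{i-1})$ and the actual $(i-1)$-st chain term --- an interaction between consecutive steps that a term-by-term estimate cannot see. The paper's proof is organized precisely so that no residual is ever created: it picks $k_1$ with $\fs{D_1}=\f{k_1}$ exactly, and at each subsequent step it first replaces the \emph{lower} argument of the next $\delta$-term by $\f{k_{i-1}}$ (so each ladder starts exactly where the previous one ended) before applying the triangle hypothesis. To complete your argument you would need to restructure the induction in that cross-term fashion, or prove a genuine rearrangement inequality for maximal chains (e.g.\ via an Abel-summation argument exploiting that $\mu(\bs{1})\ge\ldots\ge\mu(\bs{n})$ and $a_i\le\f{i}$), rather than refining the chain and discarding the residuals.
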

\begin{proof}  
``$\Rightarrow$'' 
Assume that $f(1)=x_1,$ $f(2)=x_2$  and $f(j)=x_3$ for $j\in\{3,\ldots,n\}$ such that $0\le x_1\le x_2\le x_3,$ where $n=|X|.$
Clearly,    
$$
\fC_{\mu}^{\circ,\delta} (f)=\delta(x_1,0)\circ \mu(\bs{1})+ \delta(x_2,x_{1})\circ \mu(\bs{2}) +\delta(x_3,x_2)\circ \mu(\bs{3})+\sum_{i=4}^n 0\circ \mu(\bs{i}).
$$
The operator \eqref{n:d1b} coincides with $\fC_{\mu}^{\circ,\delta},$
so  for $\cD_2=\{\bs{3},\bs{1}\}\in\cH_{\text{chain}},$ we get
$$
\delta(x_1,0)\circ \mu(\bs{1})+\delta(x_3,x_1)\circ \mu(\bs{3})\le \fC_{\mu}^{\circ,\delta}(f)
$$
for any symmetric $\mu\in\bM.$ 
In consequence, for any $y\in [0,\infty)$ and the symmetric monotone measure given by 
$$
\mu(E)=\begin{cases}
y, & \textrm{if}\, |E|\in \{n,n-1,n-2\}, \\ 
0, & \textrm{otherwise},
\end{cases}
$$ 
we obtain $\delta(x_3,x_1)\circ y\le \delta(x_2,x_{1})\circ y +\delta(x_3,x_2)\circ y,$
as $0\circ 0=0,$ which finishes the proof of the subadditivity condition.

\noi  ``$\Leftarrow$'' 
Since $\aAi[\bs{i}]{f}{\inf}=\f{i}$ for any $i\in [n],$  we get 
$$\sup_{\cD\in \cH_{\text{chain}}}\sum_{i=1}^l \delta\big(\aAi[D_i]{f}{\inf},\aAi[D_{i-1}]{f}{\inf}\big)  \circ \mu(D_{i})\ge \fC_{\mu}^{\circ,\delta}(f),
$$ 
so it is enough to show that 
   \begin{align*}
        L:=\sum_{i=1}^l \delta\big(\aAi[D_i]{f}{\inf},\aAi[D_{i-1}]{f}{\inf}\big)  \circ \mu(D_{i})\le \fC_{\mu}^{\circ,\delta}(f)
   \end{align*}
for any $\cD_l\in \cH_{\text{chain}}$ such that $l\in [n].$  
Then $L=S_1,$ where
   \begin{align*}
       S_k=\sum_{i=k}^l\delta \big(\fs{D_i},\fs{D_{i-1}}\big) \circ \mu(D_i)
   \end{align*}
for $k\in [l].$   Let $k_1=\min\{k\in [n]\colon \fs{D_1}=f_{(k)}\}.$ Thus $D_1\subseteq \bs{k_1}$ and
    \begin{align}\label{n3b}
        L\le \delta (\f{k_1},0)\circ \mu(\bs{k_1})+\delta(\fs{D_2},\f{k_1}) \circ \mu(D_2)+S_3.
    \end{align}
Then there exists $k_2\in \{k_1+1,\ldots,n\}$ such that $|D_2|=|\bs{k_2}|$ and  $\fs{D_2}\le \fs{\bs{k_2}}=f_{(k_2)}$  (see the proof of Theorem \ref{tw:4.3}). In view of  $\f{k_1}=\fs{D_1}\le \fs{D_2}$ (as $D_2\subset D_1$) and Definition~\ref{def:dis}\,(a) and (c), from  \eqref{n3b} and symmetricity of $\mu$ we get
    \begin{align}\label{n3c}
        L&\le \delta (\f{k_1},0)\circ \mu(\bs{k_1})+\delta(\f{k_2},\f{k_1}) \circ \mu(\bs{k_2})+\delta(\fs{D_{3}},\f{k_2}) \circ \mu(D_3)+S_4.
         \end{align}   
Now we  only  focus on $\delta (\f{k_1},0)\circ \mu(\bs{k_1})$ from \eqref{n3c}. By the assumption on $\delta$  and by monotonicity of $\mu$ we obtain 
    \begin{align}\label{n3d}
        \delta (\f{k_1},0)\circ \mu(\bs{k_1})&\le
        \delta(\f{1},\f{0})\circ \mu(\bs{k_1}) + \delta(\f{k_1},\f{1})\circ \mu(\bs{k_1})\notag
    \\&\le \delta(\f{1},\f{0})\circ \mu(\bs{1}) + \delta(\f{k_1},\f{1})\circ \mu(\bs{k_1})\notag
    \\&\le\ldots \le \sum_{i=1}^{k_1} \delta(\f{i},\f{i-1})\circ \mu(\bs{i}).
    \end{align}
Now consider the term $\delta(\f{k_2},\f{k_1}) \circ \mu(\bs{k_2})$ from \eqref{n3c}. Using the similar argument as above we get
\begin{align}\label{n3e}
    \delta(\f{k_2},\f{k_1}) \circ \mu(\bs{k_2})&\le
    \delta(\f{k_1+1},\f{k_1})\circ \mu(\bs{k_2}) + \delta(\f{k_2},\f{k_1+1})\circ \mu(\bs{k_2})\notag
    \\&\le \delta(\f{k_1+1},\f{k_1})\circ \mu(\bs{k_1+1}) + \delta(\f{k_2},\f{k_1+1})\circ \mu(\bs{k_2})\notag
    \\&\le \ldots\le \sum_{i=k_1+1}^{k_2} \delta(\f{i},\f{i-1}) \circ \mu(\bs{i}).
\end{align}
So putting \eqref{n3d}--\eqref{n3e} in \eqref{n3c} we have
\begin{align*}
    L&\le \sum_{i=1}^{k_2} \delta(\f{i},\f{i-1})\circ \mu(\bs{i}) + \delta(\fs{D_{3}},\f{k_2}) \circ \mu(D_3)+S_4.
\end{align*}
Repeating the same arguments for $D_3,\ldots,D_{l}$ leads to the inequality
    \begin{align*}
         L\le \sum_{i=1}^{k_l} \delta(\f{i},\f{i-1})\circ \mu(\bs{i})\le \fC_{\mu}^{\circ,\delta}(f),
    \end{align*}
which finishes the proof. 
\end{proof}

\begin{remark}
Note that the assumption on $|X|\ge 3$ and $0\circ 0=0$ is used only in the implication ``$\Rightarrow$'' in the proof of Theorem~\ref{tw:4.13}.
\end{remark}

From Theorem~\ref{tw:4.11} the two results connecting our approach with the existing ones in the literature for the set $X$ such that $|X|\ge 3$ follow:
\begin{enumerate}[noitemsep]
     \item[(a)] if $\delta\colon [0,1]^2\to [0,1]$ is a~restricted dissimilarity function,  then the Choquet-Sugeno-like operator  defined in \eqref{n:d1b} with $\circ=\cdot$ coincides with the $d$-Choquet integral  for any $f\in\bF^1$ and any symmetric capacity $\mu$ if and only if    $\delta(x_3,x_1) \le \delta(x_2,x_1)+\delta(x_3,x_2)$  for any $0\le x_1\le x_2\le x_3\le 1.$ 
     
    \item[(b)] if $\delta(x,y)=|x-y|,$ an operation $\circ\colon [0,1]^2\to [0,1]$ is nondecreasing and  $0\circ 0=0,$  then the operator \eqref{n:d1b} coincides with $\fC_{\mu}^\circ(f)$  for any $f\in\bF^1$ and any symmetric capacity $\mu$ if and only if $x\mapsto x\circ b$ is subadditive for any $b\in [0,1].$ 
\end{enumerate}

A~representation of $\fC_{\mu}^{\circ,\delta}$ using $t$-level sets reads as follows. The  result is  an easy consequence of the fact that $\delta(x,x)=0$ for any $x.$
\begin{proposition}
Let $\delta$ be a~dissimilarity function and $\circ\colon [0,\infty)^2\to [0,\infty).$ If $0\circ a=0$ for all $a,$ then
\begin{align}\label{n3f}
    \fC^{\circ,\delta}_{\mu}(f) =\sum_{i=1}^n \delta(\f{i},\f{i-1}) \circ \mu(\{f\ge \f{i}\})
\end{align}
for any $(f,\mu)\in \bF\times \bM,$  where $(\cdot)\colon [n]\to [n]$ is a~permutation such that $\f{1}\le\ldots\le \f{n}$ with $\f{0}=0.$  
\end{proposition}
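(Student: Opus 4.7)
The approach is essentially bookkeeping: the permutation $(\cdot)$ may not be unique, and $\bs{i}$ need not coincide with $\{f\ge \f{i}\}$ in the presence of ties in the sequence $\f{1}\le \ldots\le \f{n}$, but whenever they disagree the coefficient $\delta(\f{i},\f{i-1})$ vanishes. Combined with the assumption $0\circ a=0$, the corresponding summands drop out on both sides.

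More precisely, I would start by invoking Definition~\ref{def:dis}\,(b), which gives $\delta(x,x)=0$ for every $x\in[0,\infty)$. Then I would partition the index set $[n]$ into
\begin{align*}
J_< &=\{i\in[n]\colon \f{i-1}<\f{i}\}, \qquad J_= =\{i\in[n]\colon \f{i-1}=\f{i}\},
\end{align*}
using the convention $\f{0}=0$. For $i\in J_=$ we have $\delta(\f{i},\f{i-1})=0$, hence both $\delta(\f{i},\f{i-1})\circ \mu(\bs{i})=0$ and $\delta(\f{i},\f{i-1})\circ \mu(\{f\ge \f{i}\})=0$ by the assumption $0\circ a=0$. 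Thus these terms vanish from both the definition \eqref{n:d1a} and the right-hand side of \eqref{n3f}.

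For $i\in J_<$ the claim is that $\bs{i}=\{f\ge \f{i}\}$. Indeed, by construction $\bs{i}=\{(i),\ldots,(n)\}\subseteq \{f\ge \f{i}\}$ since $\f{j}\ge \f{i}$ for all $j\ge i$. Conversely, if $x\in\{f\ge \f{i}\}$, then $x=(k)$ for some $k\in[n]$ with $\f{k}\ge \f{i}$; if we had $k<i$, then by monotonicity of the sorted sequence $\f{k}\le \f{i-1}<\f{i}$, a~contradiction. (For $i=1$ the inequality $\f{0}=0<\f{1}$ requires $\f{1}>0$, but if $\f{1}=0$, then the term is again annihilated by $\delta(\f{1},0)=\delta(0,0)=0$, so it does not matter.) Hence for each $i\in J_<$ the summands agree:
\begin{align*}
\delta(\f{i},\f{i-1})\circ \mu(\bs{i})=\delta(\f{i},\f{i-1})\circ \mu(\{f\ge \f{i}\}).
\end{align*}

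Summing over $i\in[n]=J_<\cup J_=$ yields \eqref{n3f}. I do not foresee a~substantial obstacle; the only subtlety is the edge case $\f{1}=0$, which is handled by a~separate application of $\delta(0,0)=0$ and $0\circ a=0$, and the observation that the representation \eqref{n:d1a} is well defined under the hypothesis $0\circ a=0$ regardless of the choice of permutation (as noted in the discussion following \eqref{n:d1a}).
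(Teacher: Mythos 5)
Your proof is correct and is exactly the detailed version of the paper's own (one-line) argument: the paper dismisses the proposition as ``an easy consequence of the fact that $\delta(x,x)=0$,'' and your case split between tied and strictly increasing indices, together with the identification $\bs{i}=\{f\ge \f{i}\}$ when $\f{i-1}<\f{i}$, is precisely the bookkeeping that observation relies on. No gaps; the edge cases you flag ($\f{1}=0$ and permutation-independence under $0\circ a=0$) are handled appropriately.
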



It is clear that \eqref{n3f} is true for any $\mu\in\bM$ and any binary operation $\circ$ whenever $f\in\bF$ is decreasing or increasing. 
Again, it is not possible to obtain the equality \eqref{n3f} for any $f\in\bF$ and any binary operation $\circ$ even for symmetric $\mu\in\bM.$

\begin{example} 
Let $X,$ $f$ and $\mu$ be the same as in Example~\ref{ex:4.6}. Then 
\begin{align*}
    \sum_{i=1}^2\delta(\f{i},\f{i-1})\circ \mu(\{f\ge \f{i}\})&= \delta(\f{1},0)\circ \mu(X) +0\circ \mu(X),\\
    \sum_{i=1}^2\delta(\f{i},\f{i-1})\circ \mu(\bs{i})&=\delta(\f{1},0)\circ \mu(X) +0\circ \mu(\{2\}).
\end{align*}
Clearly, for $0\circ \mu(X)\neq 0\circ \mu(\{2\})$ the equality in \eqref{n3f} does not hold.
\end{example}

\subsection{\textbf{Connection with $\fC_{\mu}^{(\rF_1,\rF_2)}$}}
Let $\rF_1,\rF_2\colon [0,\infty)^2\to [0,\infty)$  and $\rF_1\ge \rF_2.$ 
The operator $\fC_{\mu}^{(\rF_1,\rF_2)}$
is defined as
\begin{align}\label{n:cf1}
    \fC_{\mu}^{(\rF_1,\rF_2)}(f)&=\sum_{i=1}^{n} \big(\rF_1 (\f{i},\mu(\bs{i}))-\rF_2(\f{i-1},\mu(\bs{i}))\big),
\end{align}
where $(\cdot)\colon [n]\to [n]$ is a~permutation such that $\f{1}\le\ldots\le \f{n}$  and $\bs{i}= \{(i),\ldots,(n)\}$ for $i \in [n]$  with $\f{0}=0.$
The formula \eqref{n:cf1} can be rewritten as follows
\small{\begin{align}\label{n:cf2}
     \fC_{\mu}^{(\rF_1,\rF_2)}(f)=\sum_{i=1}^{n-1} \big(\rF_1 (\f{i},\mu(\bs{i}))-\rF_2 (\f{i},\mu(\bs{i+1}))\big)+\rF_1(\f{n},\mu(\bs{n}))-\rF_2(0,\mu(\bs{1})).
\end{align}}\normalsize
The  operator \eqref{n:cf1} is well defined (i.e., independent on permutations) when either  $\rF_1=\rF_2$ or $f\in\bF$ is monotone (decreasing or increasing) or $\mu\in\bM$  is symmetric.

Putting  $\rF{_1}|_{[0,1]^2}=\rF{_2}|_{[0,1]^2}=\rC$ in \eqref{n:cf1}, where $\rC$ is a~copula, we get the operator firstly defined in \cite{klement10} and then redefined in \cite{lucca} under the name $\rC\rC$-integral (or
\textit{Choquet-like Copula-based integral}) of $(f,\mu)\in \bF^1\times \bM^1$ in the form
\begin{align}\label{n:cf1a}
  \fC_{\mu}^{(\rC,\rC)}(f)=\sum_{i=1}^{n} \big(\rC(\f{i},\mu(\bs{i}))-\rC (\f{i-1},\mu(\bs{i}))\big),
\end{align}
where $\f{i}$ and $\bs{i}$ are the same as in~\eqref{n:cf1}.
The operator \eqref{n:cf1a} is a~$[0,1]$-valued universal integral (\cite{klement10}). Setting $\rF_1,\rF_2\colon [0,1]^2\to [0,1]$ in \eqref{n:cf1} such that $\rF_1\ge \rF_2$ and  $\rF_1(a,\mu(X))=a$ for all $a,$ we get 
\begin{align*}
   \fC_{\mu}^{(\rF_1,\rF_2)}(f)=\f{1}+\sum_{i=2}^{n} \big(\rF_1 (\f{i},\mu(\bs{i}))-\rF_2 (\f{i-1},\mu(\bs{i}))\big)
\end{align*}
for any $(f,\mu)\in \bF^1\times \bM^1,$ being a~version of $C_{\rF_1\rF_2}$-integral defined in \cite[Definition 7]{lucca2019}.

Let $\cH=\cH_{\text{chain}}$ and $\cR=\cR^-.$
Then the Choquet-Sugeno-like operator \eqref{n:cs1} with $\rL(x,y,z,w)=\rF_1(x,z)-\rF_2(y,z)$ and $\bA=\bA^{\inf}=\widehat{\bA}$ can be written as follows
\begin{align}\label{n:cf3}
    \rCS^{(\rF_1,\rF_2)}_{\cH_{\text{chain}},\bA^{\inf},\bA^{\inf}}(f,\mu)=\sup_{\cD\in \cH_{\text{chain}}}\sum _{i=1}^l
    \big(\rF_1(\aAi[D_i]{f}{\inf},\mu(D_i))-\rF_2(\aAi[D_{i-1}]{f}{\inf},\mu(D_i))\big)
\end{align}
under the convention $D_0=\emptyset.$

A~pair $(\rF_1,\rF_2)$ of functions $\rF_1,\rF_2\colon E_1\times E_2\to [0,\infty),$ where $E_1, E_2\subseteq [0,\infty),$ is \textit{pairwise $2$-increasing} if
\begin{align*}
       \rF_1(x_1,y_2)-\rF_2(x_1,y_1)\le \rF_1(x_2,y_2)-\rF_2(x_2,y_1)
\end{align*}
for any $[x_1,x_2]\times [y_1,y_2] \subseteq E_1\times E_2.$ 
It is clear that the pair $(\rC,\rC)$ is pairwise $2$-increasing whenever $\rC$ is a~copula \cite{nelsen}. 


\begin{tw}\label{tw:4.11}
Assume that $\rF_1,\rF_2\colon [0,\infty)^2\to [0,\infty)$ are pairwise $2$-increasing,  $\rF_1$ is nondecreasing, $\rF_1\ge \rF_2$ and $\rF_2(0,b)=0$ for any $b.$
Then the operator \eqref{n:cf3} coincides with $\fC_{\mu}^{(\rF_1,\rF_2)}(f)$ for any  $f\in\bF$ and any symmetric $\mu\in\bM.$ 
\end{tw}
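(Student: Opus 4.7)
The plan is to establish the two inequalities separately, writing $\rCS$ for the Choquet-Sugeno-like operator in \eqref{n:cf3}. The inequality $\rCS(f,\mu) \ge \fC_{\mu}^{(\rF_1,\rF_2)}(f)$ is immediate: evaluate the supremum in~\eqref{n:cf3} on the particular chain $\{\bs{n} \subset \bs{n-1} \subset \ldots \subset \bs{1}\} \in \cH_{\text{chain}}$. Using $\aAi[\bs{i}]{f}{\inf} = \f{i}$ together with the conventions $\aAi[\emptyset]{f}{\inf} = 0 = \f{0}$, the contribution of this chain coincides term-by-term with~\eqref{n:cf1}.

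For the converse, fix an arbitrary chain $\cD_l = \{D_l \subset \ldots \subset D_1\} \in \cH_{\text{chain}}$ and denote its contribution to the sup by $L$. Following the construction already used in the proof of Theorem~\ref{tw:4.3}, I would select indices $k_1 < k_2 < \ldots < k_l$ in $[n]$ with $|D_i| = |\bs{k_i}|$---so that $\mu(D_i) = \mu(\bs{k_i})$ by symmetry of $\mu$---and $\fs{D_i} \le \f{k_i}$. The assumption $\rF_2(0,b)=0$ together with a routine index shift in the second sum of $L$ gives the telescoped form
\begin{align*}
L = \sum_{i=1}^{l-1} \big[\rF_1(\fs{D_i}, \mu(\bs{k_i})) - \rF_2(\fs{D_i}, \mu(\bs{k_{i+1}}))\big] + \rF_1(\fs{D_l}, \mu(\bs{k_l})).
\end{align*}
Applying the pairwise $2$-increasing property to each bracket (with $(x_1,x_2) = (\fs{D_i}, \f{k_i})$ and $(y_1,y_2) = (\mu(\bs{k_{i+1}}), \mu(\bs{k_i}))$) and monotonicity of $\rF_1$ to the last term upgrades every $\fs{D_i}$ to $\f{k_i}$, yielding $L \le \widetilde L$, where
\begin{align*}
\widetilde L = \sum_{i=1}^{l-1} \big[\rF_1(\f{k_i}, \mu(\bs{k_i})) - \rF_2(\f{k_i}, \mu(\bs{k_{i+1}}))\big] + \rF_1(\f{k_l}, \mu(\bs{k_l})).
\end{align*}
This has the same shape as the telescoped version of $\fC_{\mu}^{(\rF_1,\rF_2)}(f)$ (cf.~\eqref{n:cf2} with $\rF_2(0,\cdot)=0$), but with indices restricted to $\{k_1, \ldots, k_l\} \subseteq [n]$.

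The main obstacle is then showing $\widetilde L \le \fC_{\mu}^{(\rF_1,\rF_2)}(f)$, which I would prove by an \emph{insertion} argument: inserting one missing index at a time into the restricted sum can only increase it, so that filling in all of $[n]$ eventually recovers $\fC_{\mu}^{(\rF_1,\rF_2)}(f)$ in its telescoped form. For an interior missing index $k_i < m < k_{i+1}$, a direct calculation shows that the insertion changes $\widetilde L$ by
\begin{align*}
\rF_1(\f{m}, \mu(\bs{m})) - \rF_2(\f{m}, \mu(\bs{k_{i+1}})) + \rF_2(\f{k_i}, \mu(\bs{k_{i+1}})) - \rF_2(\f{k_i}, \mu(\bs{m})),
\end{align*}
which by pairwise $2$-increasing (applied with $(x_1,x_2)=(\f{k_i},\f{m})$ and $(y_1,y_2)=(\mu(\bs{k_{i+1}}),\mu(\bs{m}))$) is bounded below by $\rF_1(\f{k_i}, \mu(\bs{m})) - \rF_2(\f{k_i}, \mu(\bs{m})) \ge 0$, the last step using $\rF_1 \ge \rF_2$. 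Insertions of $m < k_1$ or $k_l < m \le n$ are handled analogously: the former contributes a nonnegative new term by pairwise $2$-increasing applied with $x_1=0$, while the latter uses monotonicity of $\rF_1$ together with $\rF_1 \ge \rF_2$. The essence is that pairwise $2$-increasing plays here the role analogous to subadditivity of $x \mapsto a\circ x$ in Theorems~\ref{tw:4.3} and~\ref{tw:4.13}, which is exactly what makes this chain-refinement argument go through.
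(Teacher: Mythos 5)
Your proposal is correct and follows essentially the same route as the paper's proof: the same easy lower bound via the full chain, the same telescoping identity \eqref{n:cf2} with $\rF_2(0,\cdot)=0$, the replacement of each $D_i$ by a set $\bs{k_i}$ of equal $\mu$-value via symmetry with $\fs{D_i}\le\f{k_i}$, and the same use of pairwise $2$-increasingness combined with $\rF_1\ge\rF_2$ to fill in the missing indices. The only difference is organizational — you package the gap-filling as a clean one-index-at-a-time insertion lemma for the restricted telescoped sum, whereas the paper interleaves the filling (block by block, as in \eqref{n:cf4b}) with the traversal of the chain.
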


\begin{proof}
By \eqref{n:cf1} and \eqref{n:cf3} we get $\rCS^{(\rF_1,\rF_2)}_{\cH_{\text{chain}},\bA^{\inf},\bA^{\inf}}(f,\mu)\ge \fC_{\mu}^{(\rF_1,\rF_2)}(f),$ so we will prove the reverse inequality, that is, 
   \begin{align*}
   L:=\sum _{i=1}^l
   \big(\rF_1(\aAi[D_i]{f}{\inf},\mu(D_i))-\rF_2(\aAi[D_{i-1}]{f}{\inf},\mu(D_i))\big)\le \fC_{\mu}^{(\rF_1,\rF_2)}(f)
   \end{align*}
for any $\cD_l\in \cH_{\text{chain}}$ with $l\in [n].$ Since
$$\sum_{i=1}^l\bigl(\rF_1(a_i,b_i)-\rF_2(a_{i-1},b_i)\bigr) = \sum_{i=1}^{l-1} \bigl(\rF_1(a_i,b_i)-\rF_2(a_i,b_{i+1})\bigr)+\rF_1(a_l,b_l)- \rF_2(a_0,b_1)$$
for any nonnegative sequence $(a_i)_{i=0}^l$ and $(b_i)_{i=1}^l,$
we get $L=S_1,$  where
\begin{align*}
    S_k=\sum_{i=k}^{l-1}  \big(\rF_1(\aAi[D_i]{f}{\inf},\mu(D_i))-\rF_2(\aAi[D_{i}]{f}{\inf},\mu(D_{i+1}))\big)+\rF_1(\aAi[D_l]{f}{\inf},\mu(D_l))
\end{align*}
for $k\in [l],$ as $\rF_2(0,\mu(D_1))=0.$ 
Let $k_1=\min\{k\colon  \fs{D_1}=f_{(k)}\}.$ Clearly, $D_1\subseteq \bs{k_1}.$ Hence by monotonicity of $\rF_1$
we have
\begin{align}\label{n:cf5}
L&\le \rF_1(\f{k_1},\mu(\bs{k_1}))-\rF_2(\f{k_1},\mu(D_{2}))+S_2.
\end{align}
Then there exists $k_2\in \{k_1+1,\ldots,n\}$ such that $|D_2|=|\bs{k_2}|$ and $\fs{D_2}\le\fs{\bs{k_2}}$ (see the proof of Theorem \ref{tw:4.3}). 
Thus 
    \begin{align*}
       L&\le \rF_1(\f{k_1},\mu(\bs{k_1}))-\rF_2(\f{k_1},\mu(\bs{k_2}))
       +\rF_1(\aAi[D_2]{f}{\inf},\mu(D_2))-\rF_2(\aAi[D_{2}]{f}{\inf},\mu(D_{3}))+S_3.
   \end{align*}
    Since $\rF_1$ and $\rF_2$ are  pairwise $2$-increasing, $\mu(D_3)\le \mu(D_2)$ and $\fs{\bs{k_2}}=f_{(k_2)},$ we get
    \begin{align}\label{n:cf4a}
        L&\le \rF_1(\f{k_1},\mu(\bs{k_1}))-\rF_2(\f{k_1},\mu(\bs{k_2}))\notag
       +\rF_1(f_{(k_2)},\mu(D_2))-\rF_2(f_{(k_2)},\mu(D_{3}))+S_3\notag
       \\&=\rF_1(\f{k_1},\mu(\bs{k_1}))-\rF_2(\f{k_1},\mu(\bs{k_2}))
       +\rF_1(f_{(k_2)},\mu(\bs{k_2}))-\rF_2(f_{(k_2)},\mu(D_{3}))+S_3.
       \end{align}
   Let $M=\rF_1(\f{k_1},\mu(\bs{k_1}))-\rF_2(\f{k_1},\mu(\bs{k_2})).$ Then by $\rF_1\ge \rF_2$ and pairwise $2$-increasingness we get
   \begin{align}\label{n:cf4b}
       M&\le \rF_1(\f{k_1},\mu(\bs{k_1}))-\rF_2(\f{k_1},\mu(\bs{k_1+1}))+\rF_1(\f{k_1},\mu(\bs{k_1+1}))-\rF_2(\f{k_1},\mu(\bs{k_2}))\notag
       \\&\le \rF_1(\f{k_1},\mu(\bs{k_1}))-\rF_2(\f{k_1},\mu(\bs{k_1+1}))+\rF_1(\f{k_1+1},\mu(\bs{k_1+1}))-\rF_2(\f{k_1+1},\mu(\bs{k_2}))\notag
       \\&\le \ldots\le \sum_{i=k_1}^{k_2-1} \big(\rF_1(\f{i},\mu(\bs{i}))-\rF_2(\f{i},\mu(\bs{i+1}))\big).
   \end{align}
Combining \eqref{n:cf4a} and \eqref{n:cf4b} we get
      \begin{align*}
          L&\le \sum_{i=k_1}^{k_2-1} \big(\rF_1(\f{i},\mu(\bs{i}))-\rF_2(\f{i},\mu(\bs{i+1}))\big)
         +\rF_1(f_{(k_2)},\mu(\bs{k_2}))-\rF_2(f_{(k_2)},\mu(D_{3}))+S_3.
      \end{align*}
By repeating the same arguments, we obtain  
\begin{align*}
    L&\le \sum_{i=k_1}^{k_l-1}\big( \rF_1(\f{i},\mu(\bs{i}))-\rF_2(\f{i},\mu(\bs{i+1}))\big)+\rF_1(\f{k_l},\mu(\bs{k_l})) \le \fC_{\mu}^{(\rF_1,\rF_2)}(f),
\end{align*}
where in the last inequality the formula \eqref{n:cf2} has been used. This completes the proof.  
\end{proof}

Let $\rC$ be a~copula.
Similarly as in the proof of Theorem~\ref{tw:4.11}, one can prove that the operator defined in \eqref{n:cf3} with  $\rF_{1}|_{[0,1]^2}=\rF_2|_{[0,1]^2}=\rC$ coincides with $\fC_{\mu}^{(\rC,\rC)}(f)$  ($\rC\rC$-integral) for any  $f\in\bF^1$ and any symmetric capacity $\mu.$

The operator $\fC_{\mu}^{(\rF_1,\rF_2)}$ can also be represented via $t$-level sets. Indeed, the equality
\begin{align}\label{n:4c}
   \fC_{\mu}^{(\rF_1,\rF_2)}(f)=\sum_{i=1}^{n}\big(\rF_1(\f{i},\mu(\{f\ge \f{i}\}))-\rF_2(\f{i-1},\mu(\{f\ge \f{i}\}))\big),
\end{align} 
where $(\cdot)\colon [n]\to [n]$ is a~permutation such that $0=f_{(0)}\le f_{(1)}\le \ldots \le f_{(n)},$  holds 
\begin{itemize}[noitemsep]
    \item for any $(f,\mu)\in \bF\times \bM$ whenever $\rF_1=\rF_2$, or
    \item for any $\mu\in\bM$ and any $\rF_1,\rF_2$ such that $\rF_1\ge \rF_2$ whenever $f\in\bF$ is decreasing or increasing.
\end{itemize}

Observe that the equality \eqref{n:4c} need not hold for any $f\in\bF$ and any $\rF_1,\rF_2$ such that $\rF_1\ge \rF_2$ even for symmetric $\mu\in\bM.$

\begin{example} Let $X,$ $f$ and $\mu$ be the same as in Example~\ref{ex:4.6}. Then
\begin{align*}
    \sum_{i=1}^2\big(\rF_1(\f{i},\mu(\{f\ge \f{i}\}))-\rF_2(\f{i-1},\mu(\{f\ge \f{i}\}))\big)&= \rF_1(0.5,2)-\rF_2(0,2)+\rF_1(0.5,2)-\rF_2(0.5,2),\\
   \sum_{i=1}^{2} \big(\rF_1 (\f{i},\mu(\bs{i}))-\rF_2(\f{i-1},\mu(\bs{i}))\big)&=\rF_1(0.5,2)-\rF_2(0,2)+\rF_1(0.5,1)-\rF_2(0.5,1).
\end{align*}
The equality in \eqref{n:4c} does not hold 
e.g., if $\rF_1=\rF$ and $\rF_2=c\rF$ for any $c\in (0,1)$ and any $\rF\colon [0,\infty)^2\to [0,\infty)$ such that $\rF(0.5,2)\neq \rF(0.5,1)$.
\end{example}


\subsection{\textbf{Connection with $\mathfrak{IE}_\mu^{\circ,\sI}$}}
Let $\circ\colon [0,\infty)\times (-\infty,\infty)\to (-\infty,\infty).$
Define the following operator 
\begin{align}\label{ie:n1}
   \mathfrak{IE}_\mu^{\circ,\sI}(f) = \sum_{\emptyset\neq D\subseteq X} \sI(f,D)\circ \mob (D)
\end{align}
for any $(f,\mu)\in \bF\times \bM,$ 
where $\sI\colon \bF\times \Sigma_0 \to [0,\infty)$ is an \textit{extended interaction operator} satisfying the conditions: 
\begin{itemize}[noitemsep]
    \item[$(I1)$] $\sI(f,\{i\}) = f(i)$ for any $f\in\bF$ and any $i\in [n]$; 
    \item[$(I2)$] $\sI(f,D) \le \sI(g,D)$ for any $D\in\Sigma_0$ and any $f, g\in\bF$ such that $f(x)\le g(x)$ for all $x\in D$;
    \item[$(I3)$] $\sI(f,D) \ge \sI(f,E)$ for any $f\in\bF$ and any $D,E\in\Sigma_0$ with $D\subseteq E.$
\end{itemize} 
Putting $\circ=\cdot$ and $\sI$ being an interaction operator (\cite[Definition 5]{Honda2017}) in \eqref{ie:n1}, we get a~\textit{discrete inclusion-exclusion integral} (\cite[Theorem 3]{Honda2017}).
For a~connection between $\mathfrak{IE}^{\circ,\sI}_\mu$ and the Choquet-Sugeno-like operator of the form \eqref{n:chm1} we first explain a~relationship between the extended interaction operator and conditional aggregation operator. 

\begin{example}
\begin{itemize}[noitemsep]
    \item[(i)] The mapping $\aA[D]{f}=(\textstyle{\sum_{i\in D}f(i)^p})^{1/p}$ with $p\in [1,\infty)$ is the conditional aggregation operator which is not interaction operator since it violates $(I3).$ 
    \item[(ii)] The \L ukasiewicz t-conorm $\aA[D]{f}=1\wedge \textstyle{\sum_{i\in D}f(i)}$ is the conditional aggregation operator violating the properties $(I1)$ and $(I3).$ 
\end{itemize}
\end{example}
These examples with the next proposition claim that the concept of conditional aggregation is more general than that of extended interaction operators. For that purpose we introduce the following notion: a~conditional aggregation operator $\aA[\cdot]{\cdot}$ is called \textit{conjunctive} if $\aA{f}\le \aAi{f}{\inf}$ for any $D\in\Sigma_0$ and any $f\in\bF.$ 

\begin{proposition}\label{pro:4.12}
Let $D\in \Sigma_0.$ Then $\sI(f,D)=\aA[D]{f}$ for any $f\in \bF,$ where $\sA$ is a~conjunctive conditional aggregation operator.
\end{proposition}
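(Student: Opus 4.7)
The plan is to interpret the statement as saying that every extended interaction operator, viewed as a family of maps $f\mapsto \sI(f,D)$ indexed by $D\in\Sigma_0$, is itself a conjunctive conditional aggregation operator. So I simply set $\aA[D]{f}:=\sI(f,D)$ and verify that this definition satisfies (C1), (C2) from Definition~\ref{def:conagr} together with the conjunctivity inequality $\aA[D]{f}\le \aAi[D]{f}{\inf}$.

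First, (C1) is literally the same statement as (I2), so nothing needs to be done there. The key observation is conjunctivity: for any $i\in D$, the singleton $\{i\}$ is contained in $D$, so (I3) yields $\sI(f,D)\le \sI(f,\{i\})$, and (I1) evaluates the right-hand side to $f(i)$. Taking the infimum over $i\in D$ gives
\begin{align*}
\sI(f,D)\le \inf_{i\in D} f(i)=\aAi[D]{f}{\inf},
\end{align*}
which is exactly the conjunctivity condition. Since the codomain of $\sI$ is $[0,\infty)$, this same chain applied to $f=\mI{D^c}$ pinches $\sI(\mI{D^c},D)$ between $0$ and $\inf_{i\in D}\mI{D^c}(i)=0$, giving (C2).

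There is no real obstacle: everything follows from applying (I1) and (I3) in succession to collapse $D$ to a singleton inside it, which is a two-line argument. The only care needed is to make sure that (C2) is not established directly from the axioms of $\sI$ (they do not mention $\mI{D^c}$ explicitly) but rather as a consequence of the conjunctivity inequality one has just derived.
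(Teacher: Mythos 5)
Your proof is correct and follows essentially the same route as the paper's: both use (I3) to compare $\sI(f,D)$ with $\sI(f,\{i\})$ for $i\in D$ and (I1) to evaluate the latter as $f(i)$, yielding conjunctivity, and then obtain (C2) by applying this bound to $f=\mI{D^c}$ together with nonnegativity of $\sI$. The only cosmetic difference is that the paper checks (C2) directly with the same chain before deriving conjunctivity, whereas you derive conjunctivity first and read off (C2) as a corollary.
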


\proof In order to prove that an~extended interaction operator $\sI$ is a~conditional aggregation operator, only condition $(C2)$ has to be verified. Let $D\in \Sigma_0.$ Clearly, $D$ can be written as a~finite union of singletons, let us say $D=\textstyle{\bigcup_{j\in D}\{j\}}.$
Then for any $f\in\bF$ from $(I3)$ and $(I1)$ we have $\sI(\mI{D^c},D)\le \sI(\mI{D^c},\{j\})=\mI{D^c}(j)=0$ for each $j\in D.$
This yields that $\sI$ satisfies $(C2).$ To prove conjunctivity, for $D\in\Sigma_0$ put $f(j) = \textstyle{\min_{i\in D} f(i)}.$ Then $j\in D,$ and
$$
\sI(f,D)\le\sI(f,\{j\}) =  f(j) = \aAi[D]{f}{\inf}.
$$ 
This completes the proof.
\qed


Using Proposition~\ref{pro:4.12}, the connection between $\mathfrak{IE}^{\circ,\sI}_\mu$ and \eqref{n:chm1} is immediate.

\begin{tw}
Let $\circ\colon [0,\infty)\times (-\infty,\infty)\to (-\infty,\infty)$ and $\bA$ be a~FCA consisting of all conjunctive conditional aggregation operators. Then the operator $\mathfrak{IE}^{\circ,\sI}_{\mu}(f)$ with $\mathsf{I}\in\bA$ coincides with the Choquet-Sugeno-like operator \eqref {n:chm1} for any $(f,\mu)\in\bF\times \bM.$
\end{tw}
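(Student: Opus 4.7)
The plan is to prove the theorem by direct substitution, leveraging the identification between extended interaction operators and conjunctive conditional aggregation operators established in Proposition~\ref{pro:4.12}. Since the right-hand side \eqref{n:chm1} was already obtained as an instance of Definition~\ref{def:choqsug} in Example~\ref{ex:3.6}, the whole argument amounts to matching the two summations term by term.

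First, I would unwind the hypothesis $\mathsf{I}\in\bA$. By assumption $\bA$ is the FCA consisting of all conjunctive conditional aggregation operators, so for every $D\in\Sigma_0$ and every $f\in\bF$ we may write
\begin{align*}
\sI(f,D)=\aA[D]{f}.
\end{align*}
Proposition~\ref{pro:4.12} guarantees the consistency of this identification (extended interaction operators are automatically conjunctive conditional aggregation operators, so the notation $\sI\in\bA$ is meaningful, and conversely the inclusion-exclusion formula makes sense once $\sI(f,D)$ is defined for each $(f,D)$).

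Second, I would plug this into the definition \eqref{ie:n1} of $\mathfrak{IE}^{\circ,\sI}_{\mu}$ to obtain
\begin{align*}
\mathfrak{IE}_\mu^{\circ,\sI}(f)
= \sum_{\emptyset\neq D\subseteq X} \sI(f,D)\circ \mob(D)
= \sum_{\emptyset\neq D\subseteq X} \aA[D]{f}\circ \mob(D),
\end{align*}
and recognize the right-hand side as the expression \eqref{n:chm1} from Example~\ref{ex:3.6}, namely the Choquet-Sugeno-like operator $\rCS^{\circ}_{\cH_{\text{one}},\bA}(f,\mob{})$ obtained by choosing in Definition~\ref{def:choqsug} the decomposition system $\cH=\cH_{\text{one}}$, the relation $\cR=\{(D,D)\colon D\in\Sigma_0\}$, the set function $\wmu=\mob$, and the aggregator $\rL(x,y,z,w)=x\circ w$. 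Because $\cH_{\text{one}}=\{\Sigma_0\}$ is a singleton, the outer supremum in~\eqref{n:cs1} is trivial.

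There is no real obstacle in this proof; all the technical content has been absorbed into Proposition~\ref{pro:4.12}, which supplies the inclusion $\{\text{ext. interaction ops.}\}\subseteq\{\text{conjunctive CAOs}\}$. The only points worth double-checking when drafting the final version are: (i) that $\mob$ is allowed to take negative values, which is why $\circ$ is required to have its second argument in $(-\infty,\infty)$ and why $\mob\in\bEM$ rather than $\bM$; and (ii) that the verification of conditions $(C1)$--$(C2)$ for $\sI$, already carried out in the proof of Proposition~\ref{pro:4.12}, does not require any extra work here.
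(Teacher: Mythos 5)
Your proof is correct and matches the paper's approach exactly: the paper itself states that the claim is immediate from Proposition~\ref{pro:4.12}, and your write-up simply makes that substitution explicit, identifying \eqref{ie:n1} with the instance of \eqref{n:cs1} given in Example~\ref{ex:3.6}. The only cosmetic difference is that you take $\rL(x,y,z,w)=x\circ w$ while Example~\ref{ex:3.6} uses $\rL(x,y,z,w)=y\circ w$; since $\cR=\{(D,D)\}$, both yield the same sum.
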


\section{Selected properties of the Choquet-Sugeno-like operator}\label{sec:property}

In this section several properties of the Choquet-Sugeno-like operator will be examined with respect to function $\rL.$ Let us summarize the functions $\rL=\rL(x,y,z,w)$ that have been used in the paper:
\begin{alignat*}{3}
 1.\quad & \rL_1(x,y,z,w)= |x-y|^p \quad && \text{(Example~\ref{ex:3.8});}\\
 2. \quad & \rL_2(x,y,z,w)= x\circ z \quad && \text{(Examples ~\ref{ex:3.3}--\ref{ex:3.5} with  $\rF=\circ$);}\\
 3. \quad & \rL_3(x,y,z,w)= y\circ w \quad && \text{(formula \eqref{n:chm1});}\\
 4. \quad & \rL_4(x,y,z,w)=x\circ (z-w)_+  \quad && \text{(formulas \eqref{n:c1b} and \eqref{n:c5a})};\\ 
 5. \quad & \rL_5(x,y,z,w)= (x-y)_+\circ z  \quad && \text{(formula \eqref{n:r1});}\\
 6. \quad & \rL_6(x,y,z,w)=\delta(x,y)\circ z  \quad && \text{(formula \eqref{n:d1b})};\\
 7. \quad & \rL_7(x,y,z,w)=\rF_1(x,z)-\rF_2(y,z)  \quad &&  \text{(formula \eqref{n:cf3})},
 \end{alignat*}
 where $\delta,\circ,\rF_1,\rF_2$ have been defined at the indicated places of the article.
To simplify the notation, we will write just $\rCS^{\rL}(f)$ instead of $\rCS^{\rL}_{\cH,\bA,\widehat{\bA}}(f,\mu,\wmu).$
Proofs of the results presented in this section are immediate when representing the Choquet-Sugeno-like operator \eqref{n:cs1} in the following way
\begin{align}\label{pro:n1}
    \rCS^{\rL}(f)&=\sup_{\cD\in\cH} \Big\{\sum_{(C,D)\in\cR,\,C,D\neq\emptyset}\rL\big(\aA[C]{f},\waA[D]{f},\mu(C),\wmu(D)\big)+\sum_{(C,\emptyset)\in\cR,\,C\neq\emptyset}\rL\big(\aA[C]{f},0,\mu(C),0\big)\notag
    \\&\qquad\qquad+\sum_{(\emptyset,D)\in\cR,\,D\neq \emptyset}\rL\big(0,\waA[D]{f},0,\wmu(D)\big)+\sum_{(\emptyset,\emptyset)\in\cR}\rL(0,0,0,0)\Big\},
\end{align}
as $\aA[\emptyset]{\cdot}=\waA[\emptyset]{\cdot}=0$ and $\wmu(\emptyset)=0.$

\begin{proposition}\label{pro:5.1}
If $\rL(0,0,z,w)=0$ for all $z,w,$  then $\rCS^{\rL}(0_X)=0.$ 
\end{proposition}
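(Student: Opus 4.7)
The plan is to show that, for $f=0_X$, every conditional aggregation evaluates to $0$, so that each summand in the defining formula~\eqref{n:cs1} becomes $\rL(0,0,z,w)=0$ by the hypothesis, and the supremum collapses to $0$.

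First I would verify that $\aA[D]{0_X}=0$ and $\waA[D]{0_X}=0$ for every $D\in\Sigma$. For $D=\emptyset$ this is the extended convention $\aA[\emptyset]{\cdot}=0$ adopted in the paper. For $D\in\Sigma_0$, observe that $0_X(x)=\mI{D^c}(x)=0$ for all $x\in D$, so property $(C1)$ applied in both directions yields
\begin{equation*}
\aA[D]{0_X}=\aA[D]{\mI{D^c}}=0,
\end{equation*}
where the last equality is $(C2)$. The same argument works for $\widehat{\sA}$.

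Next I would plug this into the rewriting \eqref{pro:n1} of $\rCS^{\rL}$. With $f=0_X$, every occurrence of $\aA[C]{f}$ and $\waA[D]{f}$ in the four sums equals $0$, so each summand has the form $\rL(0,0,\mu(C),\wmu(D))$ (with the third or fourth entry possibly zero when $C$ or $D$ is empty, using $\mu(\emptyset)=\wmu(\emptyset)=0$). By the standing assumption $\rL(0,0,z,w)=0$ for all $z,w$, each summand is $0$.

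Finally, since every inner sum is $0$ for every choice of $\cD\in\cH$, and $\cH$ is nonempty by definition of a decomposition system, the supremum in \eqref{n:cs1} equals $0$. The only subtlety is the case $D=\emptyset$, which is handled by the convention $\aA[\emptyset]{\cdot}=\waA[\emptyset]{\cdot}=0$; everything else is purely a matter of assembling the definitions.
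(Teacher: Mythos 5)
Your proposal is correct and follows essentially the same route as the paper: use the rewriting \eqref{pro:n1} together with the fact that $\aA[D]{0_X}=0=\waA[D]{0_X}$ for all $D\in\Sigma_0$, so every summand is $\rL(0,0,z,w)=0$. The only difference is that you derive $\aA[D]{0_X}=0$ directly from $(C1)$ and $(C2)$ (correctly, since $0_X$ and $\mI{D^c}$ agree on $D$), whereas the paper cites this fact from an earlier reference.
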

\begin{proof}
The statement follows from \eqref{pro:n1} and the fact that $\aA{0_X}=0=\widehat{\sA}(0_X|D)$ for any $D\in \Sigma_0,$ where $\sA \in \bA$ and $\widehat{\sA}\in \widehat{\bA}$ (see \cite[Proposition 3.3\,(b)]{BHHK21}).
\end{proof}

\begin{example}
The assumption in Proposition~\ref{pro:5.1} is satisfied for the following functions:
\begin{itemize}[noitemsep]
   \item $\rL_1$;
    \item $\rL_2-\rL_6$ whenever $0\circ b=0$ for any $b$;
    \item $\rL_7$ whenever $\rF_1(0,b)=\rF_2(0,b).$
  \end{itemize}
\end{example}

\begin{proposition}\label{pro:5.3}(Monotonicity) Assume that  $\rL(x,y,z,w)$ is nondecreasing in $x$ and $y$ for any fixed $z,w.$ Then
$ \rCS^{\rL}(f)\le \rCS^{\rL}(g)$ whenever $f\le g.$
\end{proposition}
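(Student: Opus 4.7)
The plan is to reduce the claim directly to properties $(C1)$ of the conditional aggregation operators together with the monotonicity of $\rL$ in its first two arguments, applying everything termwise under the sum and then passing to the supremum.

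First I would fix $f,g\in\bF$ with $f\le g$. By property $(C1)$ in Definition~\ref{def:conagr}, for every $\sA\in\bA$ and every $\widehat{\sA}\in\widehat{\bA}$ and for every conditional set $C\in\Sigma_0$, we have $\aA[C]{f}\le \aA[C]{g}$ and $\waA[C]{f}\le \waA[C]{g}$. For the empty set the convention $\aA[\emptyset]{\cdot}=\waA[\emptyset]{\cdot}=0$ makes the corresponding inequalities trivially equalities. Thus for every pair $(C,D)\in\cR$ (including the pairs in which $C$ or $D$ equals $\emptyset$, as handled in the representation \eqref{pro:n1}), the first two coordinates of the argument of $\rL$ are pointwise nondecreasing in the integrand.

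Next I would invoke the assumption that $\rL(x,y,z,w)$ is nondecreasing in $x$ and $y$ for any fixed $z,w$, applying it with $z=\mu(C)$ and $w=\wmu(D)$ (these values do not depend on the integrand). This gives, for every $(C,D)\in\cR$,
\begin{align*}
\rL\bigl(\aA[C]{f}, \waA[D]{f}, \mu(C), \wmu(D)\bigr) \le \rL\bigl(\aA[C]{g}, \waA[D]{g}, \mu(C), \wmu(D)\bigr).
\end{align*}
Summing over $(C,D)\in\cR$ preserves the inequality for each fixed collection $\cD\in\cH$, and finally taking the supremum over $\cD\in\cH$ on both sides yields $\rCS^{\rL}(f)\le \rCS^{\rL}(g)$.

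There is no real obstacle here; the only point that deserves a sentence of care is the bookkeeping for pairs $(C,D)\in\cR$ where one of the sets is empty, which is precisely the reason the expansion \eqref{pro:n1} was introduced. Since the ``empty slots'' contribute terms whose first or second $\rL$-argument is the constant $0$ for both $f$ and $g$, the termwise inequality trivially persists in those summands as well.
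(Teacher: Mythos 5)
Your proof is correct and follows exactly the paper's intended argument: the paper's own proof is the one-line instruction ``use the equality \eqref{pro:n1} and property $(C1)$,'' and you have simply carried out that plan in full detail, including the careful handling of the pairs with empty components. Nothing to add.
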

\begin{proof}
Use the equality \eqref{pro:n1} and the property $(C1)$ (see Definition \ref{def:conagr}).
\end{proof}

\begin{example}
The assumption of Proposition \ref{pro:5.3} is satisfied for functions $\rL_2-\rL_4$ whenever $x\mapsto x\circ b$ is nondecreasing for any $b.$ 
\end{example}

We say that a~conditional aggregation operator $\aA[D]{\cdot}$ is \textit{homogeneous} if $\aA[D]{\alpha f}=\alpha \aA[D]{f}$ for any $\alpha\in [0,\infty)$ and $f\in\bF.$ A~family $\bA$ of conditional aggregation operators is said to be \textit{homogeneous} if  $\aA[D]{\cdot}\in\bA$ is homogeneous for any  $D\in\Sigma_0.$

\begin{proposition}\label{pro:5.7}(Homogeneity)
Let $\bA$ and $\widehat{\bA}$ be homogeneous FCAs. If 
$\rL(\alpha x,\alpha y,z,w)= \alpha\rL(x,y,z,w)$ for all $\alpha\in [0,\infty)$ and all $x,y,z,w,$  then 
$f\mapsto \rCS^{\rL}(f)$ is a~homogeneous operator.
\end{proposition}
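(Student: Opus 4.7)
The plan is to verify the homogeneity identity $\rCS^{\rL}(\alpha f) = \alpha \rCS^{\rL}(f)$ for every $\alpha \in [0,\infty)$ directly from the representation of the Choquet-Sugeno-like operator given in \eqref{pro:n1}, splitting into the cases $\alpha > 0$ and $\alpha = 0$.

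First, I would handle the case $\alpha > 0$. Fix a collection $\cD \in \cH$ and a pair $(C,D) \in \cR$. By homogeneity of $\bA$ and $\widehat{\bA}$, we have $\aA[C]{\alpha f} = \alpha \aA[C]{f}$ and $\waA[D]{\alpha f} = \alpha \waA[D]{f}$ (with the convention that $\aA[\emptyset]{\cdot} = 0 = \waA[\emptyset]{\cdot}$, which is trivially homogeneous). Using the assumption $\rL(\alpha x, \alpha y, z, w) = \alpha \rL(x,y,z,w)$, each summand satisfies
\[
\rL\bigl(\aA[C]{\alpha f}, \waA[D]{\alpha f}, \mu(C), \wmu(D)\bigr) = \alpha\, \rL\bigl(\aA[C]{f}, \waA[D]{f}, \mu(C), \wmu(D)\bigr).
\]
Summing over $(C,D) \in \cR$ and then taking the supremum over $\cD \in \cH$, the positive factor $\alpha$ can be pulled out of both the sum and the supremum, giving $\rCS^{\rL}(\alpha f) = \alpha \rCS^{\rL}(f)$.

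For the case $\alpha = 0$, I note that setting $\alpha = 0$ in the hypothesis on $\rL$ yields $\rL(0,0,z,w) = 0$ for all $z,w$. Since $0 \cdot f = 0_X$, Proposition~\ref{pro:5.1} gives $\rCS^{\rL}(0 \cdot f) = \rCS^{\rL}(0_X) = 0 = 0 \cdot \rCS^{\rL}(f)$, completing the argument.

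There is no real obstacle here; the only minor care needed is to check that the factorization $\alpha \sup = \sup \alpha$ is valid, which holds because $\alpha \ge 0$, and to ensure the boundary terms in \eqref{pro:n1} involving $\aA[\emptyset]{\cdot} = 0$ and $\wmu(\emptyset) = 0$ scale correctly — they do, since $\rL(0,0,z,w) = 0$ by the homogeneity assumption applied at $\alpha = 0$, so those terms vanish for both $f$ and $\alpha f$.
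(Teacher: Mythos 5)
Your proof is correct and follows exactly the route the paper intends: the paper omits an explicit argument, declaring all results of Section~\ref{sec:property} immediate from the representation~\eqref{pro:n1} together with the stated hypotheses on $\bA$, $\widehat{\bA}$ and $\rL$, which is precisely what you carry out. Your explicit treatment of the case $\alpha=0$ via Proposition~\ref{pro:5.1} and of the empty-set terms is a sensible extra level of care, not a deviation.
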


\begin{example}
The assumption is satisfied for:
\begin{itemize}[noitemsep]
    \item $\rL_1$ for $p=1$; 
    \item $\rL_2-\rL_5$ if $(\alpha a)\circ b=\alpha (a\circ b)$ for any $\alpha,a,b$;   
    \item  $\rL_6$ if $\delta(\alpha x, \alpha y)\circ z=\alpha (\delta(x,y)\circ z)$ for any $\alpha,x,y,z$;
    \item $\rL_7$ if $x\mapsto \rF_1(x,b)$ and $x\mapsto\rF_2(x,b)$ are homogeneous for any $b.$
\end{itemize}
\end{example}

We say that a~conditional aggregation operator $\aA[D]{\cdot}$ is \textit{subadditive} if $\aA[D]{f+g}\le  \aA[D]{f}+\aA[D]{g}$ for any $f,g\in\bF.$ A~FCA $\bA$ is said to be \textit{subadditive}  if  $\aA[D]{\cdot}\in\bA$ is subadditive  for any $D\in\Sigma_0.$

\begin{proposition}(Subadditivity)\label{P1}
Suppose that $\bA$ and $\widehat{\bA}$ are subadditive FCAs. If  $\rL(x+a,y+b,z,w)\le \rL(x,y,z,w)+\rL(a,b,z,w)$ for all $x,y,z,w,a,b,$ then 
the mapping $f\mapsto \rCS^{\rL}(f)$ 
is subadditive.
\end{proposition}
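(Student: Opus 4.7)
The plan is to bound, for each collection $\cD\in\cH$, the inner sum in \eqref{pro:n1} for $f+g$ by the sum of the corresponding inner sums for $f$ and for $g$, and then take the supremum over $\cH$. Fix $f,g\in\bF$ and $\cD\in\cH$. Subadditivity of $\bA$ and $\widehat{\bA}$ gives
$$
\aA[C]{f+g}\le \aA[C]{f}+\aA[C]{g}\qquad \text{and}\qquad \waA[D]{f+g}\le \waA[D]{f}+\waA[D]{g}
$$
for every $C,D\in\Sigma_0$, and these extend trivially to the conventions $\aA[\emptyset]{\cdot}=\waA[\emptyset]{\cdot}=0$.

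To insert these bounds inside $\rL$ I would use monotonicity of $\rL$ in its first two arguments (a nondecreasingness hypothesis that seems implicitly needed here; it is satisfied by all of $\rL_1$--$\rL_7$ under the standing assumptions on $\circ$, $\rF_1$, $\rF_2$, $\delta$) in order to pass from $\rL(\aA[C]{f+g},\waA[D]{f+g},\mu(C),\wmu(D))$ to $\rL(\aA[C]{f}+\aA[C]{g},\,\waA[D]{f}+\waA[D]{g},\mu(C),\wmu(D))$. The subadditivity of $\rL$ in its first two coordinates then splits this as
$$
\rL(\aA[C]{f},\waA[D]{f},\mu(C),\wmu(D))+\rL(\aA[C]{g},\waA[D]{g},\mu(C),\wmu(D)).
$$
The boundary summands in \eqref{pro:n1} in which exactly one of $C$, $D$ is empty are handled identically, by plugging $y=b=0$ or $x=a=0$ into the subadditivity hypothesis for $\rL$; the $(\emptyset,\emptyset)$-summand is independent of $f,g$ and the inequality $\rL(0,0,0,0)\le 2\rL(0,0,0,0)$ that it requires is itself an instance of the hypothesis applied at zero.

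Summing the pointwise inequalities over $(C,D)\in\cR$ bounds the $f+g$ inner sum by the $f$ inner sum plus the $g$ inner sum; each of the latter is at most $\rCS^{\rL}(f)$, respectively $\rCS^{\rL}(g)$, by definition, so the $f+g$ inner sum is at most $\rCS^{\rL}(f)+\rCS^{\rL}(g)$ for every $\cD\in\cH$. Taking the supremum over $\cD$ on the left yields $\rCS^{\rL}(f+g)\le\rCS^{\rL}(f)+\rCS^{\rL}(g)$, as desired. The main obstacle I anticipate is the monotonicity gap: the subadditivity of $\rL$ alone, as stated, does not permit the insertion of the CA subadditivity bounds, and one must either add nondecreasingness of $\rL$ in $(x,y)$ as an explicit hypothesis (paralleling Proposition \ref{pro:5.3}) or verify the statement separately for each $\rL_i$ of interest, all of which are in fact monotone in $x,y$ under the conditions adopted in the preceding sections.
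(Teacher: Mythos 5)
Your argument is the intended one: the paper offers no separate proof of Proposition~\ref{P1}, only the blanket remark that the results of Section~\ref{sec:property} follow immediately from the representation \eqref{pro:n1}, and your termwise estimate over $(C,D)\in\cR$ followed by taking the supremum over $\cD\in\cH$ is exactly that argument. You are also right to flag the monotonicity gap, and it is a genuine one in the statement as printed: since the FCAs are only subadditive, one has $\aA[C]{f+g}\le\aA[C]{f}+\aA[C]{g}$ rather than equality, and the subadditivity of $\rL$ in its first two coordinates cannot absorb this inequality unless $\rL$ is nondecreasing in $(x,y)$ --- a hypothesis stated explicitly in Proposition~\ref{pro:5.3} but omitted here (alternatively, one could assume the FCAs additive). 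One caveat on your proposed repair: it is not the case that all of $\rL_1$--$\rL_7$ are nondecreasing in $(x,y)$ under the standing assumptions; $\rL_1(x,y,z,w)=|x-y|^p$ and $\rL_5(x,y,z,w)=(x-y)_+\circ z$ are not nondecreasing in $y$, so for those the insertion step needs the opposite sign condition on $\widehat{\bA}$ (superadditivity) or a direct case-by-case verification rather than the uniform monotonicity hypothesis. With that hypothesis added where it holds, your proof is complete, including the correct treatment of the boundary summands with $C=\emptyset$ or $D=\emptyset$ and the observation that $\rL(0,0,0,0)\ge 0$ is itself an instance of the assumed inequality.
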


\begin{example}
The following functions $\rL$ are subadditive for the first and second coordinate:
\begin{itemize}[noitemsep]
    \item $\rL_1$ with $p=1$;
    \item $\rL_2-\rL_4$ are subadditive if $x\mapsto x\circ b$ is subadditive for any $b$;
    \item $\rL_5$ if $x\mapsto x\circ z$ is nondecreasing and subadditive for any $z$;
    \item $\rL_6$ if $\delta(x+a,y+b)\circ z\le \delta(x,y)\circ z +\delta(a,b)\circ z$ for any $x,y,z,a,b.$
\end{itemize}
\end{example}

\begin{proposition}\label{convexity}(Convexity) 
Let $f\mapsto\aA{f}$ and $f\mapsto \waA{f}$ be convex for any $D\in\Sigma_0,$ where $\sA\in \bA$ and $\widehat{\sA}\in \widehat{\bA}.$ If 
$\rL(\lambda x+(1-\lambda)a,\lambda y+(1-\lambda)b,z,w)\le  \lambda\rL(x,y,z,w)+(1-\lambda)\rL(a,b,z,w)$ for all $\lambda\in (0,1)$ and all $x,y,z,w,a,b,$ then $f\mapsto \rCS^{\rL}(f)$ is convex.
\end{proposition}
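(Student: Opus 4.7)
The plan is to follow the template used for Propositions~\ref{pro:5.3}, \ref{pro:5.7}, and \ref{P1}: start from the representation~\eqref{pro:n1}, estimate each summand pointwise by a short chain of inequalities, and then lift the bound to the outer supremum. Fix $\lambda\in(0,1)$ and $f,g\in\bF,$ and write $h=\lambda f+(1-\lambda)g.$

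For every pair $(C,D)\in\cR$ with $C,D\in\Sigma_0$ I will proceed in three steps. First, convexity of the two conditional aggregation operators supplies
\begin{align*}
\aA[C]{h}\le \lambda \aA[C]{f}+(1-\lambda)\aA[C]{g},\qquad \waA[D]{h}\le \lambda \waA[D]{f}+(1-\lambda)\waA[D]{g}.
\end{align*}
Second, I will push these two pointwise upper bounds into the corresponding argument slots of $\rL$; this is the step where I will need $\rL(x,y,z,w)$ to be nondecreasing in $x$ and $y$ (the subtle point, discussed below). Third, the convexity hypothesis on $\rL$ distributes the convex combination across $\rL,$ yielding
\begin{align*}
\rL\bigl(\aA[C]{h},\waA[D]{h},\mu(C),\wmu(D)\bigr)&\le \lambda\,\rL\bigl(\aA[C]{f},\waA[D]{f},\mu(C),\wmu(D)\bigr)\\
&\quad+(1-\lambda)\,\rL\bigl(\aA[C]{g},\waA[D]{g},\mu(C),\wmu(D)\bigr).
\end{align*}
The three ``boundary'' groups of summands in~\eqref{pro:n1} (those with $\emptyset$ appearing in one or both coordinates) are handled identically: by the convention $\aA[\emptyset]{\cdot}=\waA[\emptyset]{\cdot}=0,$ the zero arguments are unchanged in passing from $h$ to $f,g,$ so the same convex-combination bound applies trivially.

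Summing over $(C,D)\in\cR$ for an arbitrary but fixed $\cD\in\cH,$ and then taking supremum over $\cD\in\cH$ together with the elementary inequality $\sup(A+B)\le\sup A+\sup B,$ produces $\rCS^{\rL}(h)\le\lambda\,\rCS^{\rL}(f)+(1-\lambda)\,\rCS^{\rL}(g),$ which is the required convexity.

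The main obstacle is the second step above: in order to transport the pointwise bounds on $\aA[C]{h}$ and $\waA[D]{h}$ under $\rL,$ monotonicity of $\rL$ in its first two variables is indispensable, yet it is not listed among the hypotheses of the statement. This is the same implicit monotonicity already needed in the proofs of Propositions~\ref{pro:5.3} and~\ref{P1}; I would either add ``$\rL$ nondecreasing in $x$ and $y$'' to the assumptions, or else remark that the conclusion is to be understood for those $\rL_i$ from the list opening Section~\ref{sec:property} for which this monotonicity holds under the natural conditions already named for $\circ,\delta,\rF_1,\rF_2.$
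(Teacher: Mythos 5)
Your argument follows exactly the route the paper intends: the paper gives no explicit proof of Proposition~\ref{convexity}, stating only that all results of Section~\ref{sec:property} are ``immediate'' from the representation~\eqref{pro:n1}, and your three-step estimate of each summand followed by $\sup(A+B)\le\sup A+\sup B$ is precisely that omitted computation. The caveat you raise is genuine and is a defect of the statement rather than of your proof: convexity of $f\mapsto\aA{f}$ only yields $\aA[C]{h}\le\lambda\aA[C]{f}+(1-\lambda)\aA[C]{g},$ and without $\rL$ being nondecreasing in its first two coordinates (or the FCAs being affine rather than merely convex) this upper bound cannot be transported through $\rL$ --- indeed the paper's own example $\rL_1(x,y,z,w)=|x-y|^p$ is convex but not monotone in $(x,y),$ so the proposition as printed is not justified for it; with your added monotonicity hypothesis the proof is complete.
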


\begin{example}
The assumption in Proposition~\ref{convexity} is satisfied for
\begin{itemize}[noitemsep]
    \item $\rL_1$ if $p\ge 1$;
    
    \item $\rL_2-\rL_4$ whenever $x\mapsto x\circ b$ is convex for any $b$;
    
    \item $\rL_5$  whenever $x\mapsto x\circ z$ is nondecreasing and convex for any $z.$
\end{itemize}
\end{example}

We say that a~FCA $\bA$ is \textit{idempotent} if $\aA{b\mI{X}}=b$ for any $b\in [0,\infty)$ and any $\aA[D]{\cdot}\in \bA$ with $D\in\Sigma_0$ (see \cite[Proposition 3.10]{BHHK21}).
Obviously, $\bA^{\inf}$ and $\bA^{\sup}$ are idempotent FCAs.

\begin{proposition}(Idempotency)
Let $\bA$ and 
$\widehat{\bA}$ be idempotent FCAs.
Then $\rCS^{\rL}(b\mI{X})=b$ 
for all $b\in [0,\infty)$ if and only if 
\begin{align}\label{p:1}
    \sup_{\cD\in \cH} \Big\{&\sum_{(C,D)\in \cR,\,D\neq \emptyset} \rL(b,b,\mu(C),\wmu(D))+\sum_{(C,\emptyset)\in\cR,\,C\neq \emptyset}\rL(b,0,\mu(C),0)\notag
    \\& +\sum_{(\emptyset,D)\in\cR,\,D\neq \emptyset}\rL(0,b,0,\wmu(D))+\sum_{(\emptyset,\emptyset)\in\cR}\rL(0,0,0,0)\Big\}=b
\end{align}
  for any $b\in [0,\infty).$
\end{proposition}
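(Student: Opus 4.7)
The plan is essentially a direct substitution argument based on the decomposition \eqref{pro:n1}. I would evaluate $\rCS^{\rL}(f)$ at the constant function $f=b\mI{X}$ using that representation, and show that the resulting expression coincides term-by-term with the quantity appearing inside the supremum on the left-hand side of \eqref{p:1}.

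More precisely, for every nonempty $C\in\Sigma_0$ and every $\sA\in\bA,$ idempotency of $\bA$ yields $\aA[C]{b\mI{X}}=b,$ and analogously $\waA[D]{b\mI{X}}=b$ for every nonempty $D\in\Sigma_0$ and every $\widehat{\sA}\in\widehat{\bA}.$ The convention $\aA[\emptyset]{\cdot}=\waA[\emptyset]{\cdot}=0$ (together with $\wmu(\emptyset)=0$ and $\mu(\emptyset)=0$) handles the summands where $C=\emptyset$ or $D=\emptyset.$ Substituting these values into the four sums in \eqref{pro:n1} transforms them exactly into the four sums on the left-hand side of \eqref{p:1}, so that $\rCS^{\rL}(b\mI{X})$ equals the supremum expression from \eqref{p:1} for every $b\in[0,\infty).$

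The equivalence stated in the proposition then follows immediately: $\rCS^{\rL}(b\mI{X})=b$ holds for all $b\in[0,\infty)$ if and only if the supremum expression equals $b$ for all $b,$ which is exactly condition \eqref{p:1}. There is no substantive obstacle here; the result is bookkeeping on the splitting \eqref{pro:n1}. The only care required is to track the partition of the relation $\cR$ into the four classes according to whether the first or second coordinate is the empty set, so that the conventions for empty conditional sets are applied exactly where appropriate rather than together with the idempotency identity.
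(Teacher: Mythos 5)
Your proposal is correct and is exactly the argument the paper intends: the paper gives no separate proof of this proposition, stating instead that all results of Section~\ref{sec:property} follow immediately from the representation \eqref{pro:n1}, and your substitution of $f=b\mI{X}$ with idempotency on nonempty conditional sets and the conventions on $\emptyset$ is precisely that bookkeeping. The only remark worth adding is that the first sum in \eqref{p:1} should be read as ranging over pairs with $C,D\neq\emptyset$ (as in \eqref{pro:n1}), which your partition of $\cR$ into the four classes already handles correctly.
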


For the special choice of L's the condition \eqref{p:1} can be simplified as the following examples demonstrate.
\begin{itemize}[noitemsep]
    \item Consider $\rL_1.$ If $\bA$ and $\widehat{\bA}$ are idempotent FCAs, then the $p$-variation of any constant function is equal to 0 (see Example~\ref{ex:3.8}), so the $p$-variation  is not idempotent, as expected.

    \item Consider $\rL_2$ such that $x\circ 0=0=0\circ z$ for any $x,z.$ Then \eqref{p:1} takes the form
    \begin{align}\label{p:2}
        \sup_{\cD\in \cH} \sum_{(C,D)\in \cR,\,C\neq \emptyset} b\circ \mu(C)=b.
    \end{align}
    If $\cR=\{(D,D)\colon D\in \cD\}$ for  $\cD\in\cH=\{\{D\}\colon D\in \Sigma_0\}$ (see Example~\ref{ex:3.3}), then \eqref{p:2} can be rewritten as follows $\textstyle{\sup_{C\in \Sigma_0} \{b\circ \mu(C)\}}=b$ for any $b.$ For $\rL_3$ we can apply a~similar approach.

    \item Consider 
    $\rL_4,$ $\cR=\cR^+$ and $\cH=\cH_{\text{chain}}$ (cf. section devoted to $\fC^{\mu,\wmu}_{\circ}$).  Then the condition \eqref{p:1} has the following form    
    \begin{align}\label{p:3}
        \sup_{\cD\in\cH_{\text{chain}}} \sum_{i=1}^l b\circ (\mu(D_i)-\wmu(D_{i+1}))=b
    \end{align}
    for any $b.$ The condition \eqref{p:3} is satisfied if $\circ=\cdot$ and $\mu=\wmu\in\bM^1$ (cf. \cite[Proposition 14]{horanska2018}).
    
    \item Consider $\rL_5,$ $0\circ z=0$ for any $z$ and $\cR=\{(D,D^c)\colon D\in \cD\}$ for $\cD\in \cH_{\text{one}}$ (cf. Example~\ref{ex:3.7}). Then \eqref{p:1} reduces to the equality
    $b\circ \mu(X)=b$  for any $b\in [0,\infty).$ 
    
    \item Consider $\rL_6,$ $0\circ z=0$ for any $z,$ $\cH=\cH_{\text{chain}}$ and $\cR=\cR^-$ (cf. section devoted to $\fC^{\circ,\delta}_{\mu}$). Then the condition \eqref{p:1} takes the form
    \begin{align}\label{id:n2}
        \sup_{D\in \Sigma_0}\{\delta(b,0)\circ \mu(D)\}=b
    \end{align}
    for any $b.$ The condition \eqref{id:n2} is true for nondecreasing $x\mapsto a\circ x$ for any $a$ such that $\delta(a,0)\circ \mu(X)=a$ for any $a$ (cf. \cite[Theorem 3.31]{bustince2020}).
    
    \item Consider $\rL_7,$ $\cH=\cH_{\text{chain}}$ and $\cR=\cR^-$ (cf. section devoted to $\fC^{(\rF_1,\rF_2)}_{\mu}$). Then the formula \eqref{p:1} takes the form
    \begin{align*}
        \sup_{\cD\in \cH_{\text{chain}}} \sum_{i=1}^l \bigl(\rF_1(b,\mu(D_i))-\rF_2(b,\mu(D_i))\bigr)=b
    \end{align*}
    for any $b\in [0,\infty).$
\end{itemize}

\section{Conclusion}
In this paper we have indicated a~way how to look at different operators with respect to a~nonadditive measure from a~new (in some sense unified) perspective. We have defined an operator generalizing Sugeno-like operator (Example~\ref{ex:3.3}), generalized Lebesgue integral (Example~\ref{ex:3.4}), $\rF$-decomposition integral with respect to a~finite partition decomposition system (Example~\ref{ex:3.5}), Lov\'{a}sz extension (Example~\ref{ex:3.6}) and generalized $p$-variation (Example~\ref{ex:3.8}). Moreover, we have given relationships with several functionals generalizing the discrete Choquet integral expressions
\begin{alignat*}{3}
(C)\intl_X f\md\mu&
=\sum_{i=1}^n \f{i} (\mu(\bs{i})-\mu(\bs{i+1}))\qquad&\text{(cf. $\fC_{\circ}^{\mu,\wmu}(f)$)},
\\&=\sum_{i=1}^n (\f{i}-\f{i-1})\mu(\bs{i})\quad &\text{(cf. $\fC_{\mu}^{\circ,\delta}(f)$)},
\\&=\sum_{i=1}^n \bigl(\f{i} \mu(\bs{i})-\f{i-1}\mu(\bs{i})\bigr)\qquad\qquad &\text{(cf. $\fC_{\mu}^{(\rF_1,\rF_2)}(f)$)}. 
\end{alignat*}
All of this has been possible thanks to the conditional aggregation operator defined in \cite{BHHK21} and the dependence relation between conditional sets proposed in the present paper.
The relation can generate different aggregation styles in decision making  and can be used in as diverse areas as graph theory, neural networks and fuzzy theory. Thus, describing new relations between conditional sets may produce new operators interesting both from a~theoretical point of view as well as for applications. 

\section*{Declaration of Competing Interest}

The authors declare that they have no known competing financial interests or personal
relationships that could have appeared to influence the work reported in this paper.



\begin{thebibliography}{99}
\bibitem{BBC} G. Beliakov, H. Bustince, T. Calvo, A~Practical Guide to Averaging Functions, Studies in Fuzziness and Soft Computing, Vol. 329, Springer, 2016.

\bibitem{BHHK21} M. Boczek, L. Hal\v{c}inov\'a, O. Hutn\'ik, M. Kaluszka, Novel survival functions based on conditional aggregation operators, Information Sciences (2021) \url{https://doi.org/10.1016/j.ins.2020.12.049}



\bibitem{BHoHK21} M. Boczek, A. Hovana, O. Hutník, M. Kaluszka, New monotone measure-based integrals inspired by scientific impact problem, European Journal of Operational Research 290 (2021) 346--357.




\bibitem{BHH1} J. Borzov\'{a}-Moln\'{a}rov\'{a}, L. Hal\v{c}inov\'{a}, O. Hutn\'{i}k, The smallest semicopula-based universal integrals I: Properties and characterizations, Fuzzy Sets and Systems 271 (2015) 1--17.

\bibitem{BHH2} J. Borzov\'{a}-Moln\'{a}rov\'{a}, L. Hal\v{c}inov\'{a}, O. Hutn\'{i}k, The smallest semicopula-based universal integrals II: Convergence theorems, Fuzzy Sets and Systems 271 (2015) 18--30.

\bibitem{BHH3} J. Borzov\'{a}-Moln\'{a}rov\'{a}, L. Hal\v{c}inov\'{a}, O. Hutn\'{i}k, The smallest semicopula-based universal integrals III: Topology determined by the integral, Fuzzy Sets and Systems 304 (2016) 20--34.

\bibitem{BHH4} J. Borzov\'{a}, L. Hal\v{c}inov\'{a}, O. Hutn\'{i}k, The smallest semicopula-based universal integrals: Remarks and improvements, Fuzzy Sets and Systems 393 (2020) 29--52.


\bibitem{bustince2020} H. Bustince, R. Mesiar,  J. Fernandez, M. Galar, D. Paternain, A. Altalhi, G.P. Dimuro, B. Bedregal, Z. Taká\v{c}, d-Choquet integrals: Choquet integrals based on dissimilarities, Fuzzy Sets and Systems \url{https://doi.org/10.1016/j.fss.2020.03.019}

\bibitem{Calvo} T. Calvo, A. Kolesárová, M. Komorn\'{\i}ková, R. Mesiar, Aggregation operators: properties, classes and construction methods. In: Calvo, T., Mayor, G., Mesiar, R. (eds.): Aggregation operators. New trends and applications. Physica-Verlag, Heidelberg (2002), pp. 3--104. 

\bibitem{cattaneo} M.E.G.V. Cattaneo, Statistical Decisions Based Directly on the Likelihood Function, Diss. ETH No.17122, 2007.

\bibitem{choquet} G. Choquet, Theory of capacities, Annales de l'institut Fourier 5 (1954) 131--295.

\bibitem{dimuro2020} G.P. Dimuro, J. Fernández, B. Bedregal, R. Mesiar, J.A. Sanz, G. Lucca, H. Bustince, The state-of-art of the generalizations of the Choquet integral: From aggregation and pre-aggregation to ordered directionally monotone functions, Information Fusion 57 (2020) 27--43. 

\bibitem{dubois} D. Dubois, H. Prade, A. Rico, B. Teheux, Generalized Qualitative Sugeno Integrals, Information Sciences 415-416 (2017)  429--445.

\bibitem{fernandez20} J. Fernandez, H. Bustince, L. Horansk\'a, R. Mesiar, A. Stup\v{n}anov\'a, A~generalization of the Choquet Integral Defined in Terms of the M\"obius Transform, IEEE Transactions on Fuzzy Systems 28 (2020) 2313--2319.

\bibitem{grabisch2016} M. Grabisch, Set Functions, Games and Capacities in Decision Making, Springer, 2016.

\bibitem{hoeffding} W. Hoeffding, On the distribution of the expected values of the order statistics, The Annals of Mathematical Statistics (1953) 93--100.

\bibitem{Honda2017} A. Honda, Y. Okazaki, Theory of inclusion-exclusion integral, Information Sciences 376 (2017) 136--147.


\bibitem{horanska2018} L. Horansk\'{a}, A. \v{S}ipo\v{s}ov\'{a}, A~generalization of the discrete Choquet and Sugeno integrals based on a~fusion function, Information Sciences  451 (2018) 83--99.

\bibitem{horanska20a} L. Horansk\'{a}, On compatibility of two approaches to generalization of the Lov\'{a}sz extension formula, Information Processing and Management of Uncertainty in Knowledge-Based Systems 1238 (2020) 426--434. 

\bibitem{horanska2020} L. Horansk\'{a},  H. Bustince, J. Fernandez, R. Mesiar, Generalized decomposition integral,  Information Science 538 (2020) 415--427.



\bibitem{Jin2018} L.S. Jin, M. Kalina, R. Mesiar, S. Borkotokey,  Discrete Choquet integrals for Riemann integrable inputs with some applications, IEEE Transactions on Fuzzy Systems 26 (2018) 3164--3169.

\bibitem{KOB_Chebyshev} M. Kaluszka, A. Okolewski, M. Boczek, On Chebyshev type inequalities for generalized Sugeno integrals, Fuzzy Sets and Systems 244 (2014) 51--62.


\bibitem{klement10} E.P. Klement, R. Mesiar, E. Pap, A~universal integral as common frame for Choquet and Sugeno integral,  IEEE Transactions on Fuzzy Systems 18 (2010) 178--187.


\bibitem{kol2012} A. Koles\'{a}rov\'{a}, A. Stup\v{n}anov\'{a}, J. Beganov\'{a},  Aggregation-based extensions of fuzzy measures, Fuzzy Sets and Systems 194 (2012) 1--14.


\bibitem{lovasz} L. Lov\'asz, Submodular functions and convexity,  in: Mathematical Programming: The State of the Art. Springer, Berlin (1983)  235--257.

\bibitem{lucca} G. Lucca, J.A. Sanz, G.P. Dimuro, B. Bedregal, M.J. Asiain, M. Elkano, H. Bustince,  CC-integrals: Choquet-like copula-based aggregation functions and its application in fuzzy rule-based classification systems, Knowledge-Based Systems 119 (2017) 32--43.

\bibitem{lucca2018} G. Lucca, J.A. Sanz, G.P. Dimuro, B. Bedregal, H. Bustince, R. Mesiar, $C_F$-integrals: A~new family of pre-aggregation functions with application to fuzzy rule-based classification systems, Information Sciences 435 (2018) 94--110.

\bibitem{lucca2019} G. Lucca, G.P. Dimuro, J. Fernández, H. Bustince, B. Bedregal, J.A. Sanz, Improving the performance of fuzzy rule-based classification systems based on a~nonaveraging generalization of CC-integrals named $C_{F_1F_2}$ -integrals, IEEE Transactions on Fuzzy Systems
27 (2019) 124--134.

\bibitem{meng} F. Meng, S-M. Chen, J. Tang, Multicriteria decision making based on bi-direction Choquet integrals, Information Sciences (2020), doi: \url{https://doi.org/10.1016/j.ins.2020.10.055}

\bibitem{MesK} R. Mesiar, A. Koles\'{a}rov\'{a}, H. Bustince, G.P. Dimuro, B. Bedregal, Fusion functions based discrete Choquet-like integrals, European Journal of Operational Research  252 (2016) 601--609.



\bibitem{MesSt19} R. Mesiar, A. Stup\v{n}anov\'{a},  A~note on CC-integral, Fuzzy Sets and Systems 355 (2019) 106--109.





\bibitem{nelsen} R.B. Nelsen,  An Introduction to Copulas, Springer Science \& Business Media, 2006.

\bibitem{Owen} G. Owen, Multilinear extensions of games, Management Science 18 (1972) 64--79. 


\bibitem{renyi} A. Rényi, On the theory of order statistics, Acta Mathematica Academiae Scientiarum Hungarica 4.3-4 (1953) 191--231.

\bibitem{shilkret} N. Shilkret, Maxitive measure and integration, Indagationes Mathematicae 33 (1971) 109--116. 

\bibitem{suarez} F. Su\'{a}rez García, P. Gil \'Alvarez, Two families of fuzzy integrals, Fuzzy Sets and Systems 18 (1986) 67--81.

\bibitem{sug74} M. Sugeno, Theory of Fuzzy Integrals and its Applications, Ph.D. Dissertation, Tokyo Institute of Technology, 1974.

\bibitem{seliga} A. \v{S}eliga, Decomposition integral without alternatives, its equivalence to Lebesgue integral, and computational algorithms, Journal of Automation, Mobile Robotics and Intelligent Systems 13 (2019) 41--48. 

\bibitem{torra2003} V. Torra, Twofold integral: A~Choquet integral and Sugeno integral generalization, Butllet\'{i} de l'Associaci\'{o} Catalana d'Intellig\`{e}ncia Artificial 29 (2003), 14-20 (in Catalan). Preliminary version: IIIA Research Report TR-2003-08 (in English).




\bibitem{yager} R.R. Yager, On ordered weighted averaging aggregation operators in multicriteria decision making, IEEE Transactions on Systems, Man, and Cybernetics 18 (1988) 183--190.

\bibitem{yang} Q. Yang, The Pan-integral on the Fuzzy Measure Space, Fuzzy Math. 3 (1985) 107--114.

\bibitem{zhang} Q. Zhang, R. Mesiar, J. Li, P. Struk, Generalized Lebesgue integral, International Journal of Approximate Reasoning 52 (2011) 427--443.
\end{thebibliography}
\end{document}